\newtheorem{proposition}{Proposition}[section]
\newtheorem{theorem}[proposition]{Theorem}
\newtheorem{lemma}[proposition]{Lemma}
\newtheorem{corollary}[proposition]{Corollary}
\newtheorem{definition}[proposition]{{Definition}}
\newtheorem{remark}[proposition]{{Remark}}
\newtheorem{Example}[proposition]{Example}
\newcommand{\cB}{{\mathcal B}}
\newcommand{\cD}{{\mathcal D}}
\newcommand{\cI}{{\mathcal I}}
\newcommand{\K}{{ K}}
\newcommand{\kq}{KQ/C}
\newcommand{\End}{\operatorname{End}\nolimits}
\newcommand{\smod}{\operatorname{\mathrm{mod}}\nolimits}
\renewcommand{\dim}{\operatorname{dim}\nolimits}
\newcommand{\lcl}{\Lambda/C^f}
\newcommand{\extto}{\xrightarrow}
\begin{document}

\title[The Commuting Algebra]
{The Commuting Algebra}

\author[Green]{Edward L.\ Green}
\address{Edward L.\ Green, \\ Department of
Mathematics\\ Virginia Tech\\ Blacksburg, VA 24061\\
USA}
{\email{green@math.vt.edu}
\author{Sibylle Schroll}
\address{Insitut f\"ur Mathematik, Universit\"at zu K\"oln, Weyertal 86-90, K\"oln, Germany and
Institutt for matematiske fag, NTNU, N-7491 Trondheim, Norway}
\email{schroll@math.uni-koeln.de}

\thanks{\emph{2020 Mathematics Subject Classification.}
16P10 
16P20 
16S50 
05E10
}
\thanks{This work was partially  supported through the DFG through the project SFB/TRR 191 Symplectic Structures in Geometry, Algebra and Dynamics (Projektnummer 281071066-TRR 191).}

\maketitle

\begin{abstract}
Let $KQ$ be a path algebra, where $Q$ is a finite quiver and $K$ is a field. We study $KQ/C$ where
$C$ is the two-sided ideal in $KQ$ generated by all differences of parallel paths in $Q$.  We show
that $KQ/C$ is always finite dimensional and its global dimension is finite. Furthermore, we prove that 
$KQ/C$ is Morita equivalent to an incidence algebra.
\\
The paper starts with a more general setting, where $KQ$ is replaced by $KQ/I$ with $I$ a two-sided
ideal in $KQ$.
\end{abstract}

\section{Introduction}

In the study of the  representation theory of  finite dimensional $K$-algebras  with $K$ a field, the 
algebras one encounters  often are of the form $KQ/I$, where $I$ is an admissible ideal; that is $J^n\subseteq I\subseteq J^2$, for some positive integer $n$,  and $J$ is the ideal in $KQ$ generated by the arrows of $Q$.  It is not unreasonable to say that $J$ plays a `special' role in
the theory.

Our overall goal is to show that there is another `special'  ideal in  a path algebra that connects any (not necessarily finite dimensional) path algebra of a finite quiver $Q$ with a subring of a matrix ring and with an incidence algebra.  It is well known that incidence algebras, partially ordered sets and Hasse diagrams are all interrelated and that their representation theory is well-studied and well-understood.  For some examples of classical as well as recent work in that direction, see \cite{B, FI, IZ, IM, K, NR}. 
If $\Lambda =KQ/I$ is a not necessarily finite dimensional algebra, we consider a different type of 'special'  ideal, $C$, in $\Lambda$  which has the property that  $(KQ/I)/C$ is always finite dimensional and contains information about $\Lambda$.  There are no restrictions 
on $Q$ nor $I$ other than $Q$ is a finite quiver.
 In particular,  $I$ need not be finitely generated and $KQ/I$ need not be left nor right Noetherian.  
 For the special case $I=0$ and $\Lambda =KQ$,  we prove that $KQ/C$ has finite global dimension.
Moreover, in this case, we show that a basic finite dimensional algebra, Morita equivalent to $KQ/C$, is an incidence algebra.  
This paper provides a detailed analysis of $\Lambda/C$,with special attention
given to the case $\Lambda=KQ$.

We summarize the results of the paper.  In Section 2, we define a quasi-commuting ideal and
quasi-commuting algebras.  Theorem \ref{thm:nh-dim_1} proves that all  
quasi-commuting algebras are finite dimensional   $K$-algebras. Section 3
further studies properties of quasi-commuting algebras. In Section 4, we
turn our attention to  the relation on the vertex set of $Q$
given by $v\sim w$ if there are paths in $Q$ from $v$ to $w$ and from 
$w$ to $v$. This leads to showing
 that if $I=0$ and all the coefficients in the  commuting relations defining the ideal $C$ are equal to 1,
  the  commuting algebra is in block matrix form
(see Definition \ref{def:block} and Theorem \ref{thm:struct_thm})  In particular, we see that a   commuting algebra is a subring of a full matrix ring.
   In Section 5, we apply Morita equivalence theory to find a basic algebra Morita equivalent to
the commuting algebra.  We call such an  algebra a skeleton of $Q$.  We show
that the vertex set  of a skeleton can be partially ordered (Theorem \ref{prop:partial}) and hence  skeletons 
have
finite global dimension (Theorem \ref{thm:fin_dim}).  It follows that commuting algebras
have finite global dimension.  Thus starting with a finite quiver $Q$,  a skeleton of $KQ$ is
always an incidence algebra.

\section{Quasi-commuting algebras}
We begin by recalling the definition of\emph{ parallel paths} in a quiver.
Let $p$ and $q$ be (finite) paths in $Q$.  Then we say $p$ is \emph{parallel} to $q$, denoted $p\|q$, if there exist vertices $v,w$ in $Q$
such that $vp=p, vq=q$, $pw=p$, and $qw=q$. It is convenient to let $\cB$ denote the set of finite paths in $Q$. Note that $\cB$ includes  the paths of length zero; namely,
the vertices of $Q$, and that $\cB$ is an infinite set if and only there is an oriented cycle
of length $\ge 1$ in $Q$.
 
We fix an ideal $I$ in $KQ$ that is contained in the ideal in $KQ$ generated
by paths of length 2. We let $KQ/I$ be denoted by $\Lambda$,  the canonical surjection $KQ\to \Lambda$ by $\pi$ and let $f: \cB \to K^*$ be a set map.

\begin{definition}{\rm     Let \emph{the quasi-commuting ideal of $\Lambda$ and $f$},
denoted by $C^f$,
 be the ideal in 
$\Lambda$ generated by 
all $f(p)\pi(p)-f(q)\pi(q)$,  where   $p,q\in\cB$ and $p\| q$. We call $\Lambda/C^f$  \emph{the quasi-commuting algebra of $\Lambda$ and $f$}. In the special case where $f$ is the constant map equal to $1$,  we call 
$\Lambda/C^f$, \emph{the commuting algebra of $\Lambda$} and   $C=C^f$ the commuting ideal in $\Lambda$. }
\end{definition}
Denote the canonical surjection $\Lambda\to \Lambda/C^f$ by $\rho$.

 \section{Properties of 	quasi-commuting algebras}      
The following result is fundamental to the study of the structure of quasi-commuting algebras.

\begin{theorem}\label{thm:nh-dim_1} Let $Q$ be a quiver with $n$ vertices. Then keeping the notation above, we have 
\begin{enumerate}
\item if $v_i$ and $v_j$ are vertices in $Q$, then $\dim_K(v_i\Lambda/C^f v_j)\le 1$, for
all  $1\le i,j\le n$.
\item  Every  quasi-commuting algebra of $\Lambda$ is finite dimensional, with
dimension over $K$ no greater than $n^2$, where $n$ is the number of vertices of $Q$.

\end{enumerate}

\end {theorem}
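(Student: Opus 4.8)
The plan is to reduce statement (2) to statement (1) by a Peirce-type block decomposition, and then prove (1) directly using the fact that two paths with the same source and the same target are automatically parallel.

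First I would fix notation and record the block decomposition. Since $Q$ has only finitely many vertices $v_1,\dots,v_n$, we have $1=\sum_{k=1}^n v_k$ in $KQ$ and the $v_k$ are pairwise orthogonal idempotents; ring homomorphisms preserve idempotency, orthogonality and the fact that the $v_k$ sum to the identity, so writing again $v_k$ for the image of $v_k$ in $\Lambda/C^f$ under $\rho\pi$, we obtain a decomposition of $K$-vector spaces
\[
\Lambda/C^f \;=\; \bigoplus_{1\le i,j\le n} v_i\,(\Lambda/C^f)\,v_j ,
\]
where uniqueness of the $(i,j)$-component of an element is seen by multiplying on the left by $v_i$ and on the right by $v_j$. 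Granting this, $\dim_K(\Lambda/C^f)=\sum_{i,j}\dim_K\!\bigl(v_i(\Lambda/C^f)v_j\bigr)$, so the bound $n^2$ in (2) follows at once from (1). (The standing hypothesis $I\subseteq J^2$ is not actually needed here; it only guarantees in addition that the images of the $v_k$ are nonzero.)

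Next I would prove (1). Since $v_i\Lambda v_j=\pi(v_i\,KQ\,v_j)$, we get $v_i(\Lambda/C^f)v_j=\rho\pi(v_i\,KQ\,v_j)$, and $v_i\,KQ\,v_j$ has $K$-basis the set $\cP_{ij}$ of paths $p$ in $Q$ with $v_ip=p=pv_j$ (which is empty when no such path exists, and contains the trivial path $v_i$ when $i=j$). Hence $v_i(\Lambda/C^f)v_j$ is spanned over $K$ by $\{\rho\pi(p):p\in\cP_{ij}\}$. The key observation is that any two $p,q\in\cP_{ij}$ are parallel, the pair of vertices $(v_i,v_j)$ witnessing $p\|q$. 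Consequently $f(p)\pi(p)-f(q)\pi(q)\in C^f$ by the definition of the quasi-commuting ideal, so $f(p)\,\rho\pi(p)=f(q)\,\rho\pi(q)$ in $\Lambda/C^f$, and since $f(p),f(q)\in K^*$ the vectors $\rho\pi(p)$ and $\rho\pi(q)$ are scalar multiples of one another. Thus the entire spanning set of $v_i(\Lambda/C^f)v_j$ lies on a single line, giving $\dim_K\!\bigl(v_i(\Lambda/C^f)v_j\bigr)\le 1$, which is (1); together with the first step this also yields (2).

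I do not expect a genuine obstacle. The argument combines two standard path-algebra facts — that the corner $v_iKQv_j$ is spanned by the paths from $v_i$ to $v_j$, and that $\sum_k v_k$ is the identity with the $v_k$ orthogonal idempotents — with the elementary remark that parallelism is forced by matching endpoints. The only points worth checking carefully are that these facts survive the two successive quotients $KQ\twoheadrightarrow\Lambda\twoheadrightarrow\Lambda/C^f$, which they do since the quotient maps only shrink the spanning sets, and, in the case $i=j$, that nothing in the bound requires the image of $v_i$ to be nonzero.
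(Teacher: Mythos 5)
Your proposal is correct and follows essentially the same line as the paper's proof: decompose $\Lambda/C^f$ into Peirce corners $v_i(\Lambda/C^f)v_j$, note each corner is spanned by images of paths from $v_i$ to $v_j$, and observe that any two such paths are parallel so their images under $\rho\pi$ are nonzero scalar multiples of one another, forcing each corner to have dimension at most one. The only cosmetic difference is that the paper fixes a distinguished path $p^*$ with nonzero image and shows it generates, whereas you argue all spanning vectors are pairwise proportional; these are the same argument.
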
 
\begin{proof}
We begin with a proof of (1).  Let $B$ be the $K$-basis of paths in  $v_iKQv_j$, for vertices
$v_i$ and $v_j$ of $Q$. In particular, $B=v_i\cB v_j$.
Let $\rho\pi(B)$ denote the set $\{\rho\pi(p) | p\in B\}$.  Clearly
$\rho\pi(B) $ generates $v_i(\Lambda/C^f)v_j$.  Suppose that
 $v_i(\Lambda/C^f_{\Lambda})v_j\ne 0$. It follows that there is some $p^*\in B$
such that $\rho\pi(p^*)\ne 0$.  We show that $\rho\pi(p^*)$ generates $v_i(\Lambda/C^f)v_j$ as a $K$-vector space;  thus showing $\dim_K(v_i(\Lambda/C^f)v_j)=1$.
 
We see that $\rho\pi(B)$ generates $v_i\lcl v_j$. Let $p$ be a path from $v_i$ to $v_j$
in $Q$.   Then $\pi(f(p^*)p^*-f(p)p)\in C^f$. Hence $[f(p^*)/f(p)]\rho\pi(p^*)=\rho\pi(p)$ and we conclude that $\rho\pi(p^*)$ generates $v_i\lcl v_j$.

To prove (2), note that $\Lambda/C^f =\oplus_{i=1}^n\oplus_{j=1}^n
v_i(\Lambda/C^f)v_j$.   The result follows from (1).

\end{proof}

The next result will be used frequently.

\begin{corollary}\label{prop:path_implies_arrows}
Let $Q$ be a quiver and $I$ an ideal in $KQ$ contained
in the ideal generated by paths of length $2$.  Suppose $v,w$ are vertices in $Q$ (not necessarily
distinct).  Let $f\colon \cB\to K^*$. The following statements are equivalent
\begin{enumerate}
\item There is a path $p$  from $v$ to $w$ in $Q$ such that $\rho\pi(p)\neq 0$
\item $\dim_Kv(\Lambda/C^f)w\ne 0$
\item$\dim_Kv(\Lambda/C^f)w=1$.
\end{enumerate}

\end{corollary}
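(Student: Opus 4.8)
The plan is to prove the cycle of implications $(1)\Rightarrow(3)\Rightarrow(2)\Rightarrow(1)$, since $(3)\Rightarrow(2)$ is trivial and $(2)\Rightarrow(1)$ is nearly so. For $(2)\Rightarrow(1)$: if $\dim_K v(\Lambda/C^f)w\neq 0$, then since $\rho\pi(v\cB w)$ spans $v(\Lambda/C^f)w$, some path $p$ from $v$ to $w$ must have $\rho\pi(p)\neq 0$; if there were no path from $v$ to $w$ at all, then $v(\Lambda/C^f)w$ would be spanned by the empty set and hence zero. Then $(1)\Rightarrow(3)$ is exactly the content already extracted in the proof of Theorem \ref{thm:nh-dim_1}(1): the existence of one path $p$ with $\rho\pi(p)\neq 0$ forces, via the quasi-commuting relations $\pi(f(p)p - f(q)q)\in C^f$ for every parallel path $q$, that $\rho\pi(p)$ spans all of $v(\Lambda/C^f)w$, so the dimension is exactly $1$ (it is at most $1$ by part (1) of the theorem, and it is nonzero because $\rho\pi(p)\neq 0$).

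Concretely, I would write: assume $(1)$, so there is a path $p$ from $v$ to $w$ with $\rho\pi(p)\neq 0$. By Theorem \ref{thm:nh-dim_1}(1), $\dim_K v(\Lambda/C^f)w\le 1$, and since it contains the nonzero element $\rho\pi(p)$ it equals $1$, giving $(3)$. Trivially $(3)\Rightarrow(2)$. For $(2)\Rightarrow(1)$, note that $v(\Lambda/C^f)w$ is spanned by $\{\rho\pi(q) : q \text{ a path from } v \text{ to } w\}$ (this is the image under $\rho\pi$ of the path basis of $vKQw$, exactly as in the theorem's proof); if this space is nonzero, at least one $\rho\pi(q)$ in the spanning set is nonzero, which is $(1)$.

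The step requiring the most care is not hard but is the one to state precisely: the claim that $\rho\pi(v\cB w)$ spans $v(\Lambda/C^f)w$. This follows because $\pi(v\cB w)$ spans $v\Lambda w$ (the paths from $v$ to $w$ give a $K$-basis of $vKQw$, and $\pi$ is surjective with $v(KQ/I)w$ spanned by their images since $I$ is homogeneous with respect to the vertex idempotents), and $\rho$ is a surjective algebra map commuting with the idempotents $v,w$, so $\rho\pi$ carries a spanning set of $v\Lambda w$ onto a spanning set of $v(\Lambda/C^f)w$. I expect no real obstacle here; the corollary is essentially a restatement and immediate consequence of Theorem \ref{thm:nh-dim_1} together with the bookkeeping that the vertex idempotents are preserved by all the maps in sight.

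\begin{proof}
We prove $(1)\Rightarrow(3)\Rightarrow(2)\Rightarrow(1)$.

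First observe that, since the paths from $v$ to $w$ form a $K$-basis of $vKQw$ and $I$ is contained in the ideal generated by paths of length $2$ (in particular $I$ is spanned by elements lying in various $v'KQw'$), the set $\pi(v\cB w)$ spans $v\Lambda w$. As $\rho$ is a surjective $K$-algebra homomorphism with $\rho(v)=v$ and $\rho(w)=w$, it follows that $\rho\pi(v\cB w) = \{\rho\pi(q)\mid q \text{ a path from } v \text{ to } w\}$ spans $v(\Lambda/C^f)w$.

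$(1)\Rightarrow(3)$: Suppose $p$ is a path from $v$ to $w$ with $\rho\pi(p)\neq 0$. By Theorem \ref{thm:nh-dim_1}(1), $\dim_K v(\Lambda/C^f)w\le 1$. Since $v(\Lambda/C^f)w$ contains the nonzero element $\rho\pi(p)$, we conclude $\dim_K v(\Lambda/C^f)w = 1$.

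$(3)\Rightarrow(2)$: Immediate, as $1\neq 0$.

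$(2)\Rightarrow(1)$: Suppose $\dim_K v(\Lambda/C^f)w\neq 0$. By the first paragraph, $v(\Lambda/C^f)w$ is spanned by $\{\rho\pi(q)\mid q \text{ a path from } v \text{ to } w\}$; in particular this set is nonempty and not all of its elements are zero. Hence there is a path $p$ from $v$ to $w$ with $\rho\pi(p)\neq 0$, which is $(1)$.
\end{proof}
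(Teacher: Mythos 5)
Your proof is correct and follows essentially the same route as the paper's: the equivalence of (2) and (3) comes from the dimension bound in Theorem~\ref{thm:nh-dim_1}(1), and (2)$\Rightarrow$(1) comes from the observation that $v(\Lambda/C^f)w$ is spanned by the images of paths from $v$ to $w$. You merely organize the implications cyclically and spell out the spanning argument in more detail, which the paper leaves implicit.
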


\begin{proof}  Parts (2) and (3) are seen to be equivalent by
Theorem \ref{thm:nh-dim_1}.

 It is obvious that (1) implies part (2).

Now assume part (2) holds.  Then, since $v_i\Lambda /C^f v_j \ne 0$, there is a path 
$p$ from $v_i$ to $v_j$ such that $\rho\pi(p)\neq 0$ and we are done.
\end{proof}

\begin{remark}{\rm Note that if $I = 0$, this implies that $\rho $ is the identity map and thus  there is a non-zero path from $v$ to $w$ in $Q$ if and only if $v(\lcl)w = v(KQ/C^f)w \neq 0$. 
}\end{remark}

Combining Theorem~\ref{thm:nh-dim_1} and Corollary~\ref{prop:path_implies_arrows}, we have the following. 

\begin{corollary}\label{prop:out} Let $Q$ be a quiver and $I$ an ideal in $KQ$ contained
in the ideal generated by paths of length $2$.  Suppose $v,w$ are vertices in $Q$ (not necessarily
distinct).  The following statements are equivalent
\begin{enumerate}
\item $\rho\pi(p)=0$, for all paths $p$  from $v$ to $w$ in $Q$
\item For all paths $q$ from $v$ to $w$ in $Q$, $q\in (vC^f w)+I$
\item $\dim_K(v(\lcl)w)=0$.
\end{enumerate}
\end{corollary}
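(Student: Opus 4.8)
The plan is to prove the chain of equivalences $(1)\Leftrightarrow(3)$ and $(1)\Leftrightarrow(2)$ by leaning directly on the two preceding results. For $(1)\Leftrightarrow(3)$, observe that statement $(1)$ is exactly the negation of part $(1)$ of Corollary~\ref{prop:path_implies_arrows}, and statement $(3)$ is exactly the negation of part $(2)$ (equivalently part $(3)$) of that same corollary. Since Corollary~\ref{prop:path_implies_arrows} asserts all three of its conditions are equivalent, negating the equivalence $(1)\Leftrightarrow(2)$ there gives $(1)\Leftrightarrow(3)$ here immediately. So this direction is essentially a restatement and should take only a sentence.

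The remaining work is the equivalence of $(2)$ with the other two, and I would route this through $(1)$ or $(3)$. First I would show $(2)\Rightarrow(1)$: if every path $q$ from $v$ to $w$ lies in $(vC^fw)+I$, then applying the composite $\rho\pi$ kills it, since $\pi$ annihilates $I$ and $\rho$ annihilates $C^f$; hence $\rho\pi(q)=0$ for every such path $q$, which is statement $(1)$. (One should be slightly careful that $vC^fw$ is taken inside $\Lambda$ while $q$ is a path in $KQ$; the cleanest reading is that $q$, viewed modulo $I$, lies in $vC^fw$, and then $\rho(\pi(q))=0$.) For the converse $(1)\Rightarrow(2)$: assume $\rho\pi(p)=0$ for all paths $p$ from $v$ to $w$, and let $q$ be such a path. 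Then $\pi(q)\in vC^f w$ by definition of the kernel of $\rho$ restricted to the corner, which means $\pi(q)$ lifts to an element of $vC^fw$ at the $\Lambda$-level; pulling back along $\pi$, we get $q\in (vC^fw)+\Ker\pi = (vC^fw)+I$ inside $KQ$. This gives $(2)$.

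The step I expect to require the most care is $(1)\Rightarrow(2)$, specifically the bookkeeping between the three rings $KQ$, $\Lambda=KQ/I$, and $\Lambda/C^f$, and keeping straight in which ring the ideals $C^f$ and $I$ and the corner idempotent cutoffs $v(-)w$ are being taken. In particular one must check that $\Ker(\rho) = C^f$ as an ideal of $\Lambda$, that $v\,\Ker(\rho)\,w = vC^fw$ (which is automatic since $\rho$ is a map of $\Lambda$-bimodules and idempotents act functorially), and that $\Ker(\rho\circ\pi) = \pi^{-1}(C^f) = (C^f)' + I$ where $(C^f)'$ denotes a choice of preimage; the identity $\pi^{-1}(vC^fw) = v(\pi^{-1}(C^f))w = v((C^f)'+I)w$ then needs $v$ and $w$ to commute past $I$, which holds because $I$ is a two-sided ideal. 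None of this is deep, but it is the kind of thing that is easy to state imprecisely, so I would spell out these identifications explicitly before concluding. An alternative, perhaps slicker, organization is to prove $(3)\Rightarrow(2)\Rightarrow(1)\Rightarrow(3)$ as a cycle, using $(3)\Rightarrow(1)$ from the first paragraph to close it — but the direct approach above already covers everything.
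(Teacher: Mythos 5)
Your argument is correct and matches the paper's approach: the paper offers no written proof at all, simply stating that the corollary follows by ``combining Theorem~\ref{thm:nh-dim_1} and Corollary~\ref{prop:path_implies_arrows},'' and your write-up supplies exactly that combination --- $(1)\Leftrightarrow(3)$ as the contrapositive of Corollary~\ref{prop:path_implies_arrows}, and $(1)\Leftrightarrow(2)$ by unwinding $\Ker(\rho\pi)$. Your care about the mild notational abuse in $(vC^fw)+I$ (an expression mixing a subset of $\Lambda$ with an ideal of $KQ$, best read as the $\pi$-preimage of the corner $vC^fw$) is appropriate and is the only point that genuinely needed spelling out.
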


%
%

\begin{corollary}\label{prop:basis}
Let $v$ and $w$ be two, not necessarily distinct, vertices in $Q$.
 If $p$ is a path from $v$ to $w$ such that $\rho\pi(p)\ne 0$, then $\rho\pi(p) $ is a $K$-basis
for $v(KQ/C^f)w$. If $q$ is another path from $v$ to $w$ such that 
$\rho\pi(q) \ne 0$, then $f(q)\rho\pi(q) =f(p)\rho\pi(p)$.
\end{corollary}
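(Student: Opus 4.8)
The plan is to derive this as an essentially immediate consequence of Theorem~\ref{thm:nh-dim_1} and Corollary~\ref{prop:path_implies_arrows}, together with the defining relations of $C^f$. First I would note that the hypothesis — the existence of a path $p$ from $v$ to $w$ with $\rho\pi(p)\neq 0$ — is precisely statement (1) of Corollary~\ref{prop:path_implies_arrows}, so statement (3) of that corollary yields $\dim_K v(KQ/C^f)w = 1$. (Here $I=0$, so $\pi$ is the identity map on $KQ$ and $\rho\pi(p)=\rho(p)$; the argument is verbatim the same in the general $\Lambda/C^f$ setting.) Since $\rho\pi(p)$ is by assumption a nonzero element of this one-dimensional $K$-vector space, it is automatically a $K$-basis of it, which proves the first assertion.

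For the second assertion, let $q$ be another path from $v$ to $w$. Then $p\|q$ by the very definition of parallel paths, since both are paths from $v$ to $w$; hence $f(p)\pi(p)-f(q)\pi(q)$ is one of the generators of the ideal $C^f$ and in particular lies in $C^f$. Applying $\rho$ gives $f(p)\rho\pi(p)=f(q)\rho\pi(q)$, as claimed. (This computation already occurs, in the special form needed there, inside the proof of Theorem~\ref{thm:nh-dim_1}(1).) Since $f(p),f(q)\in K^*$ and $\rho\pi(p)\neq 0$, it follows that $\rho\pi(q)\neq 0$ as well, so the hypothesis on $q$ is in fact redundant once $\rho\pi(p)\neq 0$ is assumed.

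I do not expect any genuine obstacle: the mathematical content is entirely absorbed by the one-dimensionality statement of Theorem~\ref{thm:nh-dim_1} and by the observation that two parallel paths, after applying $\rho\pi$ and rescaling by the values of $f$, coincide. The only point needing a moment of care is the bookkeeping of the scalars $f(p)$ and $f(q)$, and recalling that $f$ takes values in $K^*$ so that division by these scalars is legitimate.
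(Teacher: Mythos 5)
Your proof is correct and takes essentially the same approach the paper intends: the paper's one-line proof simply invokes Theorem~\ref{thm:nh-dim_1}, and your argument fills in exactly the expected details via Corollary~\ref{prop:path_implies_arrows} (for the one-dimensionality giving the basis claim) and the defining generators of $C^f$ (for the scalar relation between $\rho\pi(p)$ and $\rho\pi(q)$). Your additional observation that the hypothesis $\rho\pi(q)\ne 0$ is redundant is a correct and mildly useful remark.
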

\begin{proof} Using 
Theorem \ref{thm:nh-dim_1}
 the result  follows.
\end{proof}

The following result is easy to prove and we include a proof for completeness.
\begin{lemma}\label{lem:lin_alg}  Let $V$ be a $K$-vector space with $K$-basis $\cB^*=\{b_i\mid i\in\cI\}$.    Let $b\in \cB^*$ and let $f: \cB^* \to K^*$. Define $C$ to be the $K$-subspace of $V$ generated by  the set $\{f(b_i) b_i - f(b) b\mid  i
\in\cI\}$.
Then  $b\not\in C$.\end{lemma}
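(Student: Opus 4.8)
The plan is to exhibit an explicit linear functional on $V$ that vanishes on every generator of $C$ but not on $b$. First I would pick out the index $i_0 \in \cI$ with $b_{i_0} = b$. Define $\varphi \colon V \to K$ on the basis by $\varphi(b_{i_0}) = 1$ and $\varphi(b_i) = 0$ for all $i \neq i_0$, and extend $K$-linearly; this is well-defined precisely because $\cB^*$ is a basis. Then I would compute $\varphi$ on a typical generator $f(b_i) b_i - f(b) b$. If $i = i_0$ the generator is $0$ (since $b_{i_0} = b$), so $\varphi$ sends it to $0$. If $i \neq i_0$, then $b_i$ and $b = b_{i_0}$ are distinct basis vectors, so $\varphi\bigl(f(b_i) b_i - f(b) b\bigr) = f(b_i)\varphi(b_i) - f(b)\varphi(b_{i_0}) = 0 - f(b) = -f(b)$.

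This last computation shows $\varphi$ need not kill the generators outright, so the naive functional is not quite right; I would fix it by instead using $\psi \colon V \to K$ defined on the basis by $\psi(b_i) = 1/f(b_i)$ for every $i \in \cI$, extended $K$-linearly. Then for each generator, $\psi\bigl(f(b_i) b_i - f(b) b\bigr) = f(b_i)\cdot \frac{1}{f(b_i)} - f(b)\cdot \frac{1}{f(b)} = 1 - 1 = 0$, using crucially that $f$ takes values in $K^* = K \setminus \{0\}$ so the reciprocals make sense. Hence $\psi$ vanishes on the spanning set of $C$, and therefore on all of $C$. On the other hand $\psi(b) = \psi(b_{i_0}) = 1/f(b_{i_0}) = 1/f(b) \neq 0$. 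If $b$ were in $C$ we would get $0 = \psi(b) \neq 0$, a contradiction, so $b \notin C$.

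There is essentially no obstacle here: the only subtlety is making sure the functional is defined on the whole (possibly infinite) basis in a way that is uniform across all generators, and the choice $\psi(b_i) = 1/f(b_i)$ does exactly that. One should note that $\cI$ may be infinite, but defining a linear map on a basis and extending linearly is valid regardless of cardinality, and each element of $V$ is a finite linear combination of basis elements, so $\psi$ is well-defined on all of $V$. Thus the argument goes through verbatim in the infinite-dimensional case, which is the generality in which the lemma will be applied.
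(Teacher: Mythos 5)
Your proof is correct, and it takes a genuinely different route from the paper's. The paper argues by contradiction directly with linear independence: it assumes $b = \sum_{b_i \neq b}\alpha_i\bigl(f(b_i)b_i - f(b)b\bigr)$, collects the $b$-terms on the left and the $b_i$-terms ($b_i\neq b$) on the right, and compares coefficients in the basis; since each $f(b_i)\in K^*$, all $\alpha_i$ must vanish, forcing $1=0$. You instead construct an explicit separating functional $\psi$ with $\psi(b_i)=1/f(b_i)$, observe that it annihilates every generator of $C$ (and hence all of $C$) while $\psi(b)=1/f(b)\neq0$. Both proofs use the hypothesis $f(\cB^*)\subseteq K^*$ in an essential way; the paper uses it to divide and conclude $\alpha_i=0$, while you use it to define $\psi$ in the first place. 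Your approach is a little slicker and more conceptual — it exhibits a witness in the dual space rather than unwinding a hypothetical dependence relation — and it scales without fuss to the infinite-index case, as you correctly note. The paper's approach is more "hands-on" and requires no auxiliary construction, merely the definition of linear independence. Neither buys substantially more generality here, but your version is arguably easier to check since the entire argument is the single one-line computation $\psi\bigl(f(b_i)b_i - f(b)b\bigr)=1-1=0$. One small stylistic note: the false start with $\varphi$ is harmless pedagogy but would be omitted in a final write-up.
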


\begin{proof} If $b\in C$ then $b$ is a finite linear combination of elements of the form
$f(b_i) b_i- f(b) b$; say $b=\sum_{b_i \neq b} \alpha_i(f(b_i) b_i- f(b) b)$. \\  Thus we obtain
\[b+(\sum_{b_i \neq b} \alpha_i)f(b)b=\sum_{b_i \neq b} ( \alpha_if(b_i)b_i).\] This contradicts $\cB^*$ is a linearly independent set.\end{proof}

As noted in the proof of Theorem \ref{thm:nh-dim_1}, the quasi-commuting algebra, $\lcl$, has
a  direct sum  decomposition

\[(*)\quad \quad \Lambda/C^f = \oplus_{i=1}^n\oplus_{j=1}^n
 v _i(\Lambda/C^f) v_j.\]
 
We use this observation in the next section, and
we end this section, with the following result:

\begin{lemma}\label{lem:surjection}Let $L$ and $I$ be ideals in $KQ$ such that $L\subseteq I\subseteq J^2$, the ideal  of $KQ$ generated by paths of length 2. 
Then for any $f: \cB \to K^*$,  the quasi-commuting algebra of $KQ/L$  and $f$  maps onto the quasi-commuting algebra of $KQ/I$ and  $f$. 
\end{lemma}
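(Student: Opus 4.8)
The plan is to exploit the standard correspondence between quotients of $KQ$ and ideals containing the relations. Since $L \subseteq I$, there is a canonical surjection $KQ/L \to KQ/I$, which I will call $\phi$; it sends the coset of a path $p$ modulo $L$ to the coset of $p$ modulo $I$. Write $\Lambda_L = KQ/L$, $\Lambda_I = KQ/I$, with canonical surjections $\pi_L : KQ \to \Lambda_L$ and $\pi_I : KQ \to \Lambda_I$, so that $\phi \circ \pi_L = \pi_I$. The first step is to identify where $\phi$ sends the quasi-commuting ideal $C^f_L$ of $\Lambda_L$. By definition $C^f_L$ is generated by the elements $f(p)\pi_L(p) - f(q)\pi_L(q)$ over all parallel pairs $p \| q$ in $\cB$; applying $\phi$, these map to $f(p)\pi_I(p) - f(q)\pi_I(q)$, which are precisely the generators of $C^f_I$, the quasi-commuting ideal of $\Lambda_I$. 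Hence $\phi(C^f_L) = C^f_I$ (equality, since $\phi$ is surjective and the generators correspond exactly).

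The second step is to pass to the quotients. Because $\phi$ is a surjective ring homomorphism carrying $C^f_L$ into $C^f_I$, it induces a well-defined surjective ring homomorphism
\[
\ov\phi : \Lambda_L / C^f_L \longrightarrow \Lambda_I / C^f_I ,
\]
by the universal property of quotient rings: the composite $\Lambda_L \xrightarrow{\phi} \Lambda_I \to \Lambda_I / C^f_I$ kills $C^f_L$ (as $\phi(C^f_L) = C^f_I$ maps to zero), so it factors through $\Lambda_L / C^f_L$; surjectivity of $\ov\phi$ is inherited from surjectivity of $\phi$. This $\ov\phi$ is exactly the asserted map from the quasi-commuting algebra of $KQ/L$ and $f$ onto the quasi-commuting algebra of $KQ/I$ and $f$, which completes the proof.

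There is essentially no hard part here: the argument is a routine application of the universal property of quotients, and the only thing to check carefully is the bookkeeping that the generators of $C^f_L$ map onto the generators of $C^f_I$ under $\phi$, which is immediate from the fact that $\phi$ commutes with the canonical surjections from $KQ$ and that "being parallel" is a condition purely on paths in $Q$, independent of which relation ideal is imposed. One minor point worth a sentence is that $C^f_L$ and $C^f_I$ are genuinely ideals in $\Lambda_L$ and $\Lambda_I$ respectively (so the quotients make sense), but this is part of their definition in the paper. If one wants to be fully explicit, one can also note that $L \subseteq I \subseteq J^2$ guarantees both $L$ and $I$ are admissible-type ideals in the sense required for the quasi-commuting construction to be defined, so the hypotheses line up with the setup of Section 2.
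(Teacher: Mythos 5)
Your proof is correct and follows essentially the same route as the paper's: both identify the canonical surjection $\phi:KQ/L\to KQ/I$ (induced by $L\subseteq I$), observe that the generators $f(p)\pi_L(p)-f(q)\pi_L(q)$ of $C_L^f$ map onto the generators $f(p)\pi_I(p)-f(q)\pi_I(q)$ of $C_I^f$, and then pass to the quotient; the paper phrases this via a commutative diagram of short exact sequences with $KQ\xrightarrow{=}KQ$ in the middle, while you invoke the universal property of quotient rings directly, but the content is identical.
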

\begin{proof}
Let 
$\pi_I \colon KQ\to KQ/I$,  $\pi_L \colon KQ\to KQ/L$, $\rho_I\colon KQ/I\to (KQ/I)/C_I^f$, and
$\rho_L\colon KQ/L\to (KQ/L)/C_L^f$
be the canonical surjections.  Then $C_L^f$ is generated
by $f(p) \pi_L(p)-f(q)\pi_L(q)$ for parallel paths $p$ and $q$ and $\pi_L(f(p) p-f(q) q)\in C_L^f$
and $\pi_I(f(p)p-f(q)q)\in C_I^f$.  Thus $C_L^f\to C_I^f$ is a surjection.  The
result  follows from the exact commutative diagram:
\[\xymatrix{ 0\ar[r]&0\\
(KQ/L)/C_L^f\ar[u]\ar[r]& (KQ/I)/C_I^f\ar[u]\\
KQ\ar[u]\ar[r]^=&KQ\ar[u]\\%
C_L^f\ar[u]\ar[r]& C_I^f\ar[u]\\
0\ar[r]\ar[u]&0\ar[u]
}\]
\end{proof}

Letting $L=0$ in the above lemma, we see that if $I$ is an ideal contained in $J^2$, 
the quasi-commuting algebra of $KQ$ for some function $f$  maps onto the commuting algebra  of $KQ/I$ for the same $f$.

\section{Path connected components}

For the remainder of the paper, we will restrict our attention to the `hereditary' case;
that is, we assume $I=0$, and we study the quasi-commuting algebras
 of a path algebra $KQ$.

Next, consider the relation $\sim$ on the vertex set of  $Q$ given by $v\sim w$ if and only if there
are paths from $v$ to $w$ and from $w$ to $v$.  
 Then $\sim$ is easily seen to be an equivalence relation on  the vertex set  of $Q$.
The equivalence class of a vertex $v$, denoted $\wr v\wr$, is  called the \emph{path connected component of $v$}  .
It is well-known that different path connected components are disjoint and that the disjoint union of the path connected
components is the set of vertices.  Note that $v\in \wr v \wr$ since $v$ is a path of length $0$ with $v$ 
both the start and end vertex.  

 We will see in Theorem~\ref{thm:struct_thm} that the path connected components are related to the ring structure of the commuting algebra of $KQ$.  Before that the next few results show that in studying the structure of 
quasi-commuting algebras, we may assume that $Q$ has no loops or multiple arrows.

Our next goal is to describe the ring structure of a quasi-commuting algebra of $KQ$ and $f$.
In the hereditary case ($I=0$), we have $\pi\colon KQ\to KQ/I$ is the identity map.
 Thus by Corollary \ref{prop:path_implies_arrows}, we have that $vKQ/C^fw\ne 0$ if and only if there is a path $p$ in $Q$ from $v$ to $w$.

We fix the following notation: $Q $ is a quiver with $n$ vertices $v_1,\dots, v_n$ and 
$f:\cB\to K^*$ is a set map.  Let
$\cD=\{D_1,\dots, D_m\}$  be the path connected components of $Q$. For
$i=1,\dots, m$, set  $d_i = |D_i|$ and hence, $\sum_{i=1}^md_i=n$. 
\\ \begin{definition}\rm{
If $a$ and $b$ are positive integers,   $M_{a\times b}(K)$ denotes the $a\times b$  matrix with
each entry $K$.  We also define $M_{a\times b}(0)$ to be the $a\times b$ matrix, all of whose entries
are $0$. 
}
\end{definition}

The next result will be applied in the Structure Theorem below.
\begin{proposition}\label{prop:struct}
 Let $v$ and $w$ be vertices in $Q$ with $v$ in $D_i$ and $w$ in $D_j$. 
\begin{enumerate}
\item If $i\ne j$, and  there is a path in $Q$ from $v$ to $w$, then  there is no path in $Q$ from any vertex 
in $D_j$  to any vertex in $D_i$.
\item There is a path in $Q$ from $v$ to $w$ if and only if  there are paths in $Q$ from each vertex
in $D_i$ to each vertex in $D_j$.
\item the $d_i\times d_i$-matrix with entries $v_i(KQ/C^f)v_i$  is $M_{i,i}(K)$, the 
 $d_i\times d_i$-matrix ring with entries $K$.
\item if $i\ne j$ and there are no paths from $D_i$ to $D_j$, then
 the  $d_i\times d_j$-matrix with entries $v_i(KQ/C^f)v_j$ is $M_{i,j}(0) $.   
\item if $i\ne j$, and there is a path from a vertex in $D_i$ to a vertex in $D_j$
the $d_i\times d_j$-matrix with entries $v_i(KQ/C^f)v_j$ is $M_{i,j}(K) $
and the  $d_j\times d_i$-matrix with entries $v_j(KQ/C^f)v_i$ is $M_{j,i}(0) $.                                                          
\end{enumerate} 
\end{proposition}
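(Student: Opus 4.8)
The plan is to prove (1) and (2) by elementary path combinatorics in $Q$, and then to obtain (3)--(5) by reading off the dimensions $\dim_K v_k(KQ/C^f)v_l$ for all vertices $v_k,v_l$ from Corollary~\ref{prop:path_implies_arrows} together with the Remark following it, which (for $I=0$) say that $v_k(KQ/C^f)v_l\neq 0$ — equivalently one-dimensional, by Theorem~\ref{thm:nh-dim_1} — if and only if there is a path from $v_k$ to $v_l$ in $Q$.

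For (1): suppose $v\in D_i$, $w\in D_j$ with $i\neq j$, that there is a path $p\colon v\to w$, and, seeking a contradiction, that there is a path $q\colon w'\to v'$ with $w'\in D_j$ and $v'\in D_i$. Since $v'\sim v$ there is a path $v'\to v$, and since $w\sim w'$ there is a path $w\to w'$; concatenating $p$, then $w\to w'$, then $q$, then $v'\to v$ produces a path $w\to v$, which together with $p$ forces $v\sim w$, i.e.\ $D_i=D_j$, contradicting $i\neq j$. For (2): the ``if'' direction is immediate (take $p$ itself), and for ``only if'', given a path $v\to w$ and arbitrary $v'\in D_i$, $w'\in D_j$, prepend a path $v'\to v$ (available since $v'\sim v$) and append a path $w\to w'$ (since $w\sim w'$) to get a path $v'\to w'$.

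Now (3)--(5) follow by combining this dimension dichotomy with (1) and (2). Any two vertices of a single $D_i$ are joined by a path, so each entry of the $D_i\times D_i$ block is a copy of $K$; this is (3). If $i\neq j$ and there is no path from $D_i$ to $D_j$, then in particular no vertex of $D_i$ reaches a vertex of $D_j$, so the entire $D_i\times D_j$ block vanishes; this is (4). If $i\neq j$ and some vertex of $D_i$ reaches some vertex of $D_j$, then (2) supplies a path between every such pair, so the $D_i\times D_j$ block is all $K$, while (1) forbids any path from $D_j$ to $D_i$, so the $D_j\times D_i$ block is all $0$; this is (5).

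There is no genuine obstacle here: part (1) is the only step requiring an idea (the observation that within a path connected component every vertex reaches every other), and everything else is formal. The one point that goes beyond a dimension count is the refinement in (3) that the diagonal block is the matrix \emph{ring} $M_{d_i\times d_i}(K)$ rather than merely a square array of copies of $K$. For this, note first that $\rho(v_k)\neq 0$ for every vertex: the trivial path shows $v_k(KQ/C^f)v_k\neq 0$, so some path $c\colon v_k\to v_k$ has $\rho(c)\neq 0$, and since $f(c)\rho(c)=f(c')\rho(c')$ for any two parallel paths (as in the proof of Theorem~\ref{thm:nh-dim_1}), every path $v_k\to v_k$, in particular $v_k$ itself, has nonzero image. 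Choosing paths $p_{kl}\colon v_k\to v_l$ for $v_k,v_l\in D_i$ with $p_{kk}=v_k$ and setting $e_{kl}=\rho(p_{kl})$, one gets a basis of the block with $e_{kl}e_{l'm}=0$ for $l\neq l'$ (the idempotents $\rho(v_l),\rho(v_{l'})$ are orthogonal) and $e_{kl}e_{lm}=c_{klm}e_{km}$ with $c_{klm}\in K^*$ (it is the image of a path $v_k\to v_m$, which is nonzero by the same one-dimensionality argument); rescaling $e_{kl}$ by $c_{1kl}^{-1}$ — consistent precisely because of the associativity identity $c_{1kl}c_{1lm}=c_{1km}c_{klm}$ — converts these into honest matrix units and identifies the block with $M_{d_i\times d_i}(K)$. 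These blocks are then assembled, together with (4) and (5), in the Structure Theorem~\ref{thm:struct_thm}.
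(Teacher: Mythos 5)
Your argument for (1), (2), (4), and (5) is exactly the paper's: the same concatenation-of-paths contradiction for (1), the same prepend/append construction for (2), and the same reduction of (4) and (5) to (1), (2) and the dimension dichotomy of Corollary~\ref{prop:path_implies_arrows}. Where you genuinely go beyond the paper is in (3): the paper's proof is the single line ``Use (2), that $v(KQ/C^f)v$ is 1-dimensional, and that $D_i$ is path connected,'' which pins down the dimensions but leaves the identification of the diagonal block with the matrix \emph{ring} $M_{d_i\times d_i}(K)$ implicit. Your construction of matrix units $e_{kl}=\rho(p_{kl})$, the computation $e_{kl}e_{lm}=c_{klm}e_{km}$ with $c_{klm}\in K^{*}$, and the cocycle rescaling $e_{kl}\mapsto c_{1kl}^{-1}e_{kl}$ (justified by the associativity relation $c_{1kl}c_{1lm}=c_{1km}c_{klm}$) supply the ring isomorphism honestly, and this care is actually needed here since the proposition is stated for a general $f$, where the constants $c_{klm}=f(p_{km})/f(p_{kl}p_{lm})$ need not be $1$. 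So: same route, but a welcome tightening of the one step the paper waves at.
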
   \begin{proof} \begin{enumerate}
\item Since $i\ne j$, $D_i\cap D_j=\emptyset$.  Suppose there are  paths from a vertex
$v\in D_i $ to $w\in D_j$ and  from a vertex
$w'\in D_j$ to $v'\in D_i$.  But  since there are paths from $w$ to $w'$ and from $v'$ to $v$, concatenating the paths,
we obtain a cycle at $v$ containing  $w$.  Thus $v$ and $w$ are in the same connected component - a contradiction.

\item If $v, v'\in D_i$ and $w,w'\in D_ j$,  and $v\extto{p}w$ is a path in $Q$, then there is a path $v' \to v\extto{q}w\to w'$.

\item Use (2), that $v(KQ/C^f)v$ is 1-dimensional, and that $D_i$ is path connected.

\item Clear.

\item Follows from (1), (2) and (4).

\end{enumerate}
\end{proof}

\begin{definition}\label{def:block}{\rm Given positive integers $d_1,\dots, d_m$ and  $n=\sum_{i=1}^md_i$.
An $n\times n$ display of $0$s and $K$s is said to be in \emph{$(d_1,\dots,d_m)$ block form},
if 
  \[A = \begin{pmatrix}
B_{1,1}&B_{2,2}&\cdots&B_{2,m}\\
\vdots&\vdots&&\vdots\\
B_{m,1}&B_{m,2}&\cdots&B_{m,m}\end{pmatrix}\]\\
\\where each $B_{i,i}$ is the  $d_i\times d_i$ matrix ring with entries $K$, and
 each $B_{i, j}$ is a $d_i\times d_j $-matrix with entries either 0 or $K$.  }
\end{definition}

For the rest of the paper,
 we assume that $f(p)=1$ for all paths $p$ in $\cB$.  For this choice of $f$, we let $C= C^f$ and
recall that we say that $KQ/C$ is the commuting algebra of $KQ$.

The next result shows that the commuting algebra of $KQ$ is isomorphic to a subring of
$n\times n $ matrix ring with entries $K$.

\begin{theorem}[Structure Theorem]\label{thm:struct_thm} Let $Q$ be a finite quiver with $n$
vertices, $\{v_1,\dots,v_n\}$.  Let $D_1,\dots, D_m$ be the path connected components of $Q$ with 
$|D_i|=d_i$,
for $i=1,\dots,m$. Reorder the vertices so that the first $d_1$ vertices are the vertices in $D_1$, 
the next $d_2$ vertices are the vertices in $D_2, \dots$, the last $d_m$ vertices are the vertices in $D_m$.
The commuting algebra of $KQ$ is in $(d_1,\dots, d_m)$ block form
with $B_{i,j} =M_{i,j}(K)$ if there is a path in  $Q$ from some vertex in $  D_i$ to a vertex in $D_j$
and 	 $B_{i,j} =M_{i,j}(0)$ if there is no  path in  $Q$ from any vertex in $  D_i$ to a vertex in $D_j$.

	\end{theorem}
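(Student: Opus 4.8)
The plan is to assemble the Structure Theorem almost mechanically from Proposition~\ref{prop:struct}, treating it as a bookkeeping exercise rather than a new argument. After reordering the vertices so that $D_1$ occupies positions $1,\dots,d_1$, then $D_2$ occupies the next $d_2$ positions, and so on, the idempotents $v_1,\dots,v_n$ give the Peirce decomposition $(*)$ displayed earlier, and I would interpret the $n\times n$ array whose $(k,\ell)$ entry is $v_k(KQ/C)v_\ell$ as being partitioned into $m^2$ rectangular blocks, the $(i,j)$ block being the $d_i\times d_j$ array with entries $v_k(KQ/C)v_\ell$ as $v_k$ ranges over $D_i$ and $v_\ell$ over $D_j$. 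The content of the theorem is then exactly the identification of each of these blocks, and that is precisely what parts (3), (4) and (5) of Proposition~\ref{prop:struct} deliver: the diagonal block $B_{i,i}$ is $M_{i,i}(K)$ by (3); an off-diagonal block $B_{i,j}$ is $M_{i,j}(K)$ when there is a path from some vertex of $D_i$ to some vertex of $D_j$, by (5) together with (2); and $B_{i,j}=M_{i,j}(0)$ when there is no such path, by (4). By Proposition~\ref{prop:struct}(1), for $i\ne j$ at most one of $B_{i,j}$, $B_{j,i}$ can be of type $M(K)$, which is consistent with the block-form definition but not actually needed for the statement as phrased.

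The one genuinely substantive point I would make carefully is that this array of vector spaces really is a \emph{ring} isomorphic to the $n\times n$ matrix ring sitting inside $M_n(K)$ described by the block pattern, not merely a vector-space decomposition with the right dimensions. For this I would check that multiplication in $KQ/C$ matches matrix multiplication under the identification. Fix, for each ordered pair $(k,\ell)$ with $v_k(KQ/C)v_\ell\ne 0$, a nonzero element $e_{k\ell}:=\rho\pi(p_{k\ell})$ for some path $p_{k\ell}$ from $v_k$ to $v_\ell$; by Corollary~\ref{prop:basis} this is a basis of the one-dimensional space $v_k(KQ/C)v_\ell$ and is independent of the path chosen up to scalar (indeed, since $f\equiv 1$, it is literally independent of the path: any two parallel paths have equal image). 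Then $e_{k\ell}e_{\ell p}$ lies in $v_k(KQ/C)v_p$; it is the image of the path $p_{k\ell}p_{\ell p}$, which is a nonzero path from $v_k$ to $v_p$ precisely when such a composite path survives, i.e.\ when $v_k(KQ/C)v_p\ne 0$, and in that case $e_{k\ell}e_{\ell p}=e_{kp}$ because the commuting relations force all parallel paths to the same nonzero class. When $e_{k\ell}e_{\ell p}=0$ one checks that $v_k(KQ/C)v_p=0$ as well, using transitivity of "there is a path" combined with Corollary~\ref{prop:path_implies_arrows}; and products $e_{k\ell}e_{p q}$ with $\ell\ne p$ vanish by orthogonality of the $v_i$. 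These are exactly the multiplication rules for matrix units supported on the prescribed block pattern, so sending $e_{k\ell}\mapsto E_{k\ell}$ (the standard matrix unit) extends to a $K$-algebra isomorphism onto the described subring of $M_n(K)$.

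The main obstacle — really the only place one could slip — is verifying that the set of pairs $(k,\ell)$ with $v_k(KQ/C)v_\ell\ne 0$ is closed under composition in the way matrix-unit multiplication demands, and that the chosen basis elements multiply without stray scalars. Both are handled by the results already in hand: Proposition~\ref{prop:struct}(2) upgrades "a path between two particular vertices" to "paths between all pairs of vertices in the two components," which is what makes the blocks homogeneous; Corollary~\ref{prop:path_implies_arrows} converts nonvanishing of a Peirce component into existence of a surviving path and back; and the hypothesis $f\equiv 1$ removes the scalar ambiguity in Corollary~\ref{prop:basis}. So after fixing the vertex ordering I would (i) partition $(*)$ into blocks, (ii) invoke Proposition~\ref{prop:struct}(3)--(5) to identify each block as $M_{i,j}(K)$ or $M_{i,j}(0)$ according to the stated path criterion, (iii) choose matrix-unit representatives and check the multiplication table as above, and (iv) conclude that the resulting bijection with the block-patterned subring of $M_n(K)$ is a ring isomorphism, which is the assertion of the theorem.
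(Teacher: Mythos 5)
Your proof is correct and takes essentially the same route as the paper, which simply states that the theorem "directly follows from Proposition~\ref{prop:struct}" applied to the Peirce decomposition $(*)$. The extra paragraph verifying that the basis elements $e_{k\ell}$ multiply like matrix units is a worthwhile elaboration the paper leaves implicit (and your worry about the case $e_{k\ell}e_{\ell p}=0$ with both factors nonzero is moot, since in the hereditary, $f\equiv 1$ case every path from $v_k$ to $v_p$ has nonzero image by Lemma~\ref{lem:lin_alg}).
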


\begin{proof} The proof directly follows from Proposition~\ref{prop:struct}.
\end{proof}

\begin{definition}\label{def:vert_ord}{\rm Keeping the notation of the theorem, if the vertices are ordered in such a fashion that the first $d_1$ vertices are the vertices in $D_1$, 
the next $d_2$ vertices are the vertices in $D_2$,  ..., the last $d_m$ vertices are the vertices in $D_m$,
we say the ordering of the vertices is \emph{ consistent} with $\cD$.}\end{definition}

\section{Morita equivalence}

Using  Morita  equivalence,   we find a basic algebra in the Morita class of a commuting algebra.   Recall that
a finite dimensional $K$-algebra, $\Lambda$, is called \emph{basic} if every simple  right $\Lambda$-module is 
1-dimensional.    Note that  a finite dimensional basic  $K$-algebra is unique, up to ring isomorphism, in its Morita class.  
We choose a an  ordering of  the vertices of $Q$ consistent with  $\cD$.

Select one vertex, $w_i\in D_i$, from each $D_i$.  Let 
$P=\oplus_{i=1}^m \rho(w_i)KQ/C$.
Note that each $\rho(w_i)$ is a nonzero idempotent in $KQ/C$ since $\rho$ is a ring homorphism and
$w_i\not\in C$ by Lemma \ref{lem:lin_alg}.

\begin{lemma}\label{lem:projgen}  The right $\kq$-module $P$ is a right projective generator for 
$\smod(\kq)$.\end{lemma}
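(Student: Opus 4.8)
The plan is to verify the two defining properties of a projective generator: that $P$ is projective as a right $KQ/C$-module, and that every finitely generated right $KQ/C$-module is a quotient of a finite direct sum of copies of $P$.

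\textbf{Projectivity.} Since each $\rho(w_i)$ is an idempotent in $KQ/C$ (by Lemma~\ref{lem:lin_alg}, $w_i\notin C$, so $\rho(w_i)\neq 0$, and idempotency is inherited from the idempotents $w_i$ in $KQ$), each summand $\rho(w_i)(KQ/C)$ is a direct summand of the free module $(KQ/C)_{KQ/C}$, hence projective. A finite direct sum of projectives is projective, so $P$ is projective.

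\textbf{Generation.} First I would note that $1=\sum_{i=1}^n \rho(v_i)$ in $KQ/C$, so $KQ/C=\bigoplus_{i=1}^n \rho(v_i)(KQ/C)$ as a right module, and therefore it suffices to show that each indecomposable projective $\rho(v)(KQ/C)$, for $v$ a vertex of $Q$, is a summand (equivalently, a quotient, since it is projective) of some $\rho(w_i)(KQ/C)$. Given a vertex $v$, let $D_i$ be its path connected component, so $v\sim w_i$; then there are paths $p\colon w_i\to v$ and $q\colon v\to w_i$ in $Q$. By Corollary~\ref{prop:path_implies_arrows} (with $I=0$, so $\pi$ is the identity), the elements $\rho(p)\in \rho(w_i)(KQ/C)\rho(v)$ and $\rho(q)\in \rho(v)(KQ/C)\rho(w_i)$ are nonzero, and by Corollary~\ref{prop:basis} each of these spaces is one-dimensional, spanned by $\rho(p)$ and $\rho(q)$ respectively. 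The composite $qp$ is a path from $w_i$ to $w_i$; since $w_i(KQ/C)w_i$ is one-dimensional spanned by $\rho(w_i)$ (Corollary~\ref{prop:basis} again, as $w_i$ itself is a path of length zero from $w_i$ to $w_i$ with $\rho(w_i)\neq 0$), we get $\rho(q)\rho(p)=\rho(qp)=\lambda\rho(w_i)$ for some $\lambda\in K$. The key point is that $\lambda\neq 0$: indeed, applying Corollary~\ref{prop:basis} with $f\equiv 1$ to the parallel paths $qp$ and $w_i$ (both from $w_i$ to $w_i$), since $\rho(w_i)\neq 0$ we get $\rho(qp)=\rho(w_i)$, so $\lambda=1$. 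Hence, rescaling if necessary, right multiplication by $\rho(p)$ defines a $KQ/C$-module map $\rho(w_i)(KQ/C)\to \rho(v)(KQ/C)$ which is surjective (its image contains $\rho(q)\rho(p)=\rho(w_i)\cdot$ wait --- more precisely, the image contains $\rho(q)$ composed appropriately); cleanly, the maps $x\mapsto \rho(p)x$ and $y\mapsto \rho(q)y$ between $\rho(w_i)(KQ/C)$ and $\rho(v)(KQ/C)$ compose to the identity in one direction, exhibiting $\rho(v)(KQ/C)$ as a direct summand of $\rho(w_i)(KQ/C)$. Therefore $\rho(v)(KQ/C)$ is a quotient of $P$, and since $KQ/C$ is a finite direct sum of such modules, $KQ/C$ itself --- and hence every finitely generated right module --- is a quotient of a finite direct sum of copies of $P$.

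\textbf{Main obstacle.} The routine parts (projectivity, the idempotent decomposition) are immediate; the crux is showing that the round-trip composite $\rho(q)\rho(p)$ is a nonzero scalar multiple of $\rho(w_i)$, so that $\rho(v)(KQ/C)$ really is a summand and not merely related to $\rho(w_i)(KQ/C)$. This is exactly where the commuting relations with $f\equiv 1$ are essential: Corollary~\ref{prop:basis} forces $\rho(qp)=\rho(w_i)$ because $qp\| w_i$. I would present this computation carefully, since it is the only place the argument could go wrong, and then assemble the conclusion from the decomposition $KQ/C=\bigoplus_{v}\rho(v)(KQ/C)$.
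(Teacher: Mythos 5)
Your proof is essentially the paper's: projectivity is immediate from $\rho(w_i)$ being a nonzero idempotent (Lemma~\ref{lem:lin_alg}), and for the generator part you, like the paper, pick a vertex $v\in D_i$, choose paths $p$ and $q$ between $w_i$ and $v$ in both directions, and use that the round-trip products reduce to the idempotents modulo $C$ (because $pq$ and $qp$ are parallel to vertices and $f\equiv 1$), so that left multiplication by $\rho(p)$ and $\rho(q)$ exhibit $\rho(v)(KQ/C)$ as (in fact isomorphic to, not merely a summand of) $\rho(w_i)(KQ/C)$. One bookkeeping slip to fix before this is presentable: with your choices $p\colon w_i\to v$ and $q\colon v\to w_i$, the product $\rho(q)\rho(p)=\rho(qp)$ lies in $\rho(v)(KQ/C)\rho(v)$ and equals $\rho(v)$ (it is $\rho(p)\rho(q)=\rho(pq)$ that lies in $\rho(w_i)(KQ/C)\rho(w_i)$ and equals $\rho(w_i)$), and the stray mid-sentence ``wait'' interjections should be deleted.
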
   

\begin{proof}Clearly $P$ is a projective $KQ/C$-module. Let $v$ be a vertex in $Q$. To show that
$P$  is a generator we need to show that every indecomposable projective module, $\rho(v)\kq$, is isomorphic to one of the $\rho(w_i)\kq$. 
We have that $v\in D_i$, for some $i$. 
 Thus there is a path $p$ from $v$ to $w_i$ and $q$ from $w_i$ to $v$ since both
$v$ and $w_i$ are in the same path connected component. 
Consider $\rho(pq)+C\in \rho(w_i)KQ/C$ and $\rho(qp)+C\in \rho(v)KQ/C$. 
Then $qp-w_i \in C$ and $pq-v\in C$.  Hence $\rho(pq)=\rho(v)$ and $\rho(qp)=\rho(w_i)$.
It follows that  multiplication on the right by the elements $\rho(p)+C$ and $\rho(q)+C$ induce the desired inverse isomorphisms. 
\end{proof}

\begin{theorem}\label{thm:morita}Let $Q$ be  a quiver with $n$ vertices $\{v_1,\dots v_n\}$ and let	$D_1,\dots D_m$
be the path connected components of $Q$. Set $P=\oplus_{j=1}^m\rho(w_j)KQ/C$, where, for  each $j=1,\dots, m$,  $w_j$ is a vertex in $D_j$.
 Then the  $K$-algebra $\End_{KQ/C}(P)$ is a basic algebra in the 
Morita  class of $KQ/C$.

\end{theorem}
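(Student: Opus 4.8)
The plan is to invoke the standard Morita theory machinery together with Lemma~\ref{lem:projgen}. Recall the basic fact from Morita theory: if $P$ is a finitely generated projective generator for $\smod(A)$ for a ring $A$, then $A$ and $B := \End_A(P)$ are Morita equivalent, with the equivalence $\smod(A) \to \smod(B)$ given by $\Hom_A(P, -)$. Since Lemma~\ref{lem:projgen} establishes precisely that $P = \oplus_{j=1}^m \rho(w_j)KQ/C$ is a right projective generator for $\smod(KQ/C)$, we conclude that $\End_{KQ/C}(P)$ is Morita equivalent to $KQ/C$. It remains to verify that $\End_{KQ/C}(P)$ is \emph{basic}.

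First I would recall that, by Theorem~\ref{thm:nh-dim_1}, $KQ/C$ is a finite-dimensional $K$-algebra, hence so is its Morita-equivalent algebra $\End_{KQ/C}(P)$ (alternatively, $\End_{KQ/C}(P)$ is a finite-dimensional $K$-algebra directly because $P$ is a finite-dimensional $K$-vector space). Next I would show that the indecomposable summands $\rho(w_1)KQ/C, \dots, \rho(w_m)KQ/C$ of $P$ are pairwise non-isomorphic. Indeed, if $\rho(w_i)KQ/C \cong \rho(w_j)KQ/C$ as right $KQ/C$-modules, then there would be paths $p$ from $w_i$ to $w_j$ and $q$ from $w_j$ to $w_i$ with $\rho(pq) = \rho(w_i)$ and $\rho(qp) = \rho(w_j)$ nonzero; by Corollary~\ref{prop:path_implies_arrows} this forces $w_i$ and $w_j$ to lie in the same path connected component, hence $D_i = D_j$ and $i = j$. (One should be slightly careful here: the cleanest argument is that an isomorphism of indecomposable projectives restricts the simple tops, and $\Top(\rho(v)KQ/C)$ is the simple module $S_v$ supported at $v$; since the $w_j$ lie in distinct components they are distinct vertices, so the tops $S_{w_i}$ are pairwise non-isomorphic.)

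Now I would argue basicness. Each $\rho(w_j)KQ/C$ is indecomposable as a right $KQ/C$-module: its endomorphism ring is $\rho(w_j)(KQ/C)\rho(w_j)$, which by Theorem~\ref{thm:nh-dim_1}(1) is at most $1$-dimensional over $K$, hence equals $K$ (it contains the identity $\rho(w_j) \neq 0$), so it is local. Therefore $P$ is a direct sum of $m$ pairwise non-isomorphic indecomposable projective $KQ/C$-modules, and $\Hom_{KQ/C}(P,-)$ sends these to the $m$ pairwise non-isomorphic indecomposable projective $\End_{KQ/C}(P)$-modules; equivalently, the simple $\End_{KQ/C}(P)$-modules are in bijection with these, and each simple top $\Hom_{KQ/C}(P, S_{w_j})$ has $K$-dimension equal to $\dim_K \Hom_{KQ/C}(\rho(w_j)KQ/C, S_{w_j}) = \dim_K \big( S_{w_j} \rho(w_j) \big) = 1$. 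Hence every simple right $\End_{KQ/C}(P)$-module is $1$-dimensional, so $\End_{KQ/C}(P)$ is basic. Combined with the Morita equivalence above, this proves the theorem.

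The main obstacle is essentially bookkeeping rather than any deep difficulty: one must be careful to state the Morita-theoretic input precisely (finitely generated projective generator $\Rightarrow$ Morita equivalence via $\End$) and to correctly identify the simple modules and their multiplicities so that the $1$-dimensionality of the simple tops is transparent. A secondary point worth a sentence is justifying that the $\rho(w_j)$ may be chosen to be \emph{primitive} idempotents — or rather, since we are not decomposing $1$ but only taking one idempotent per component, we should note that $\rho(w_j)KQ/C$ is already indecomposable (as shown via its local endomorphism ring), so no further refinement is needed and $P$ has exactly $m$ indecomposable summands up to isomorphism, matching the number of path connected components.
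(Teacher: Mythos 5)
Your proposal is correct and follows essentially the same route as the paper: invoke Lemma~\ref{lem:projgen} to get a finitely generated projective generator, apply standard Morita theory to obtain the equivalence with $\End_{KQ/C}(P)$, and then deduce basicness from the fact that $\rho(w_1),\dots,\rho(w_m)$ are pairwise non-isomorphic primitive idempotents. The paper states the last step more tersely (``a full set of orthogonal non-isomorphic idempotents''); you simply supply the standard details --- local endomorphism rings $\rho(w_j)(KQ/C)\rho(w_j)\cong K$ and pairwise non-isomorphism of the summands --- which is exactly the content that argument was relying on.
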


\begin{proof}
By Lemma \ref{lem:projgen}, we have that      $P$ is a finitely generated projective generator for the category of finitely generated right $KQ/C$-modules.  By Morita equivalence, 
the category of finitely generated right $KQ/C$-modules is equivalent to
the category of finitely generated  right $\End_{KQ/C}(P)$-modules. 
Since $\{\rho(w_1),\dots, \rho(w_m)\}$ is a full set of
orthogonal non-isomorphic idempotents in $\kq$,  $\End_{\kq}(P)$ is a basic algebra Morita equivalent to $\kq$.
\end{proof}

  We immediately have the following. 

\begin{proposition}\label{prop:end_P} The following results hold:
\begin{enumerate}
\item For $1\le i\le m$, 	$\End_{\kq}\rho(w_i)(\kq	)\rho(w_i)$ is isomorphic to $K$.
\item 		For $1\le i,j\le m$	with $i\ne j$, 	$\End_{\kq}\rho(w_i)(\kq	)\rho(w_j)$ is isomorphic to 
$K$
if and only if there is a path $p$ in $Q$ from $w_j$ to $w_i$. Otherwise, 	
$\End_{\kq}\rho(w_i)(\kq	)\rho(w_j)=0$.
\item For $1\le i,j\le m$	with $i\ne j$, if $\End_{\kq}\rho(w_i)(\kq	)\rho(w_j)$ is isomorphic to $K$,
then $\End_{\kq}\rho(w_j)(\kq	)\rho(w_i)=0$.
\end{enumerate}
\end{proposition}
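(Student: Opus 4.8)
The plan is to read off all three statements directly from the explicit matrix description of $KQ/C$ given by the Structure Theorem \ref{thm:struct_thm}, together with the standard description of $\End$ of a sum of indecomposable projectives in terms of hom-spaces. First I would recall that for any idempotents $e,e'$ in a ring $R$ one has $\Hom_R(e'R, eR) \cong e R e'$ via $\varphi \mapsto \varphi(e')$, and that composition corresponds to multiplication in $R$; applying this with $R = KQ/C$, $e = \rho(w_i)$, $e' = \rho(w_j)$ identifies $\End_{\kq}\bigl(\rho(w_i)(\kq)\rho(w_j)\bigr)$ — which I read as the hom-space $\Hom_{\kq}(\rho(w_j)\kq,\rho(w_i)\kq)$ — with the $K$-space $\rho(w_i)(\kq)\rho(w_j)$. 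So each part reduces to computing $\dim_K \rho(w_i)(\kq)\rho(w_j)$, which is exactly the content of Theorem \ref{thm:nh-dim_1} and Corollary \ref{prop:path_implies_arrows}.

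For part (1), since $w_i \in D_i$ is a vertex, $w_i(\kq)w_i$ is the $(w_i,w_i)$-diagonal entry of the block $B_{i,i} = M_{i,i}(K)$, hence one-dimensional, spanned by the image of the trivial path $w_i$; as $w_i$ is an idempotent that maps to a nonzero idempotent (Lemma \ref{lem:lin_alg}), this $K$-line is a subring isomorphic to $K$, so $\End_{\kq}\rho(w_i)(\kq)\rho(w_i) \cong K$. For part (2), apply Corollary \ref{prop:path_implies_arrows} with $v = w_j$, $w = w_i$: the space $\rho(w_i)(\kq)\rho(w_j) = w_i(KQ/C)w_j$ is nonzero (and then $1$-dimensional) exactly when there is a path in $Q$ from $w_j$ to $w_i$, and is $0$ otherwise. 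One-dimensionality over $K$ forces the endomorphism ring (hom-space) to be isomorphic to $K$ as a $K$-vector space — and since a nonzero path from $w_j$ to $w_i$ composed with one from $w_i$ to $w_j$ (which exists, $D_i$, $D_j$ both being path connected to themselves... wait, no) — actually here I only need the $K$-vector-space statement, which is immediate. For part (3), suppose $\End_{\kq}\rho(w_i)(\kq)\rho(w_j) \cong K$; by part (2) there is a path from $w_j$ to $w_i$ in $Q$, i.e. a path from a vertex of $D_j$ to a vertex of $D_i$. By Proposition \ref{prop:struct}(1) (or Theorem \ref{thm:struct_thm}) there is then no path from any vertex of $D_i$ to any vertex of $D_j$, in particular none from $w_i$ to $w_j$; hence by Corollary \ref{prop:path_implies_arrows} (or Proposition \ref{prop:struct}(5)), $w_j(KQ/C)w_i = \rho(w_j)(\kq)\rho(w_i) = 0$, so the corresponding endomorphism space vanishes.

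The only mild subtlety — and the step I would take most care over — is the notational one: matching the symbol $\End_{\kq}\rho(w_i)(\kq)\rho(w_j)$ appearing in the statement with the hom-space $\Hom_{\kq}(\rho(w_j)\kq, \rho(w_i)\kq)$, and checking that under the isomorphism with $\rho(w_i)(\kq)\rho(w_j)$ the ring structure in part (1) is really that of $K$ (not merely a $1$-dimensional $K$-algebra with some exotic multiplication, which for a $1$-dimensional algebra containing the identity idempotent $\rho(w_i)$ cannot happen — it is forced to be $K\cdot \rho(w_i) \cong K$). Everything else is a direct invocation of the already-established results, so no genuinely hard step remains; the proof is essentially a bookkeeping exercise translating the block form of Theorem \ref{thm:struct_thm} into statements about $\End P$.
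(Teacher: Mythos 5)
Your overall strategy is the paper's strategy: reduce each assertion to a question about the dimension of a corner space $w_a(\kq)w_b$, and then read those dimensions off from Theorem~\ref{thm:nh-dim_1}, Corollary~\ref{prop:path_implies_arrows}, and Proposition~\ref{prop:struct}; the paper's proof is a one-line invocation of Proposition~\ref{prop:struct}, and your proof unpacks exactly that.

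However, there is a directional error in part (2) that you carry into part (3). You identify $\End_{\kq}\rho(w_i)(\kq)\rho(w_j)$ with $\Hom_{\kq}(\rho(w_j)\kq,\rho(w_i)\kq)\cong\rho(w_i)(\kq)\rho(w_j)$, and then assert that Corollary~\ref{prop:path_implies_arrows} with $v=w_j$, $w=w_i$ says that $w_i(\kq)w_j$ is nonzero iff there is a path from $w_j$ to $w_i$. That is a misapplication: with $v=w_j$, $w=w_i$ the Corollary speaks about the space $v(\kq)w=w_j(\kq)w_i$, not $w_i(\kq)w_j$. The Corollary's own convention is explicit: $v(\kq)w\neq 0$ iff there is a path from $v$ to $w$, so the space $w_i(\kq)w_j$ you name would be nonzero iff there is a path from $w_i$ to $w_j$ --- the opposite of what you wrote. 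As things stand, you have two compensating swaps: an opposite reading of the $\End$ notation (which, to match the statement of parts (2) and (3), must mean $\Hom_{\kq}(\rho(w_i)\kq,\rho(w_j)\kq)\cong w_j(\kq)w_i$, not $\Hom_{\kq}(\rho(w_j)\kq,\rho(w_i)\kq)$), and then a backwards reading of the Corollary. The net conclusion is correct, but the intermediate reasoning is not internally consistent: if you actually followed through with your stated identification and the Corollary as written, you would get ``nonzero iff path from $w_i$ to $w_j$'' and be stuck contradicting the proposition's statement. Pick one reading of $\End_{\kq}\rho(w_i)(\kq)\rho(w_j)$, track the order of the idempotents carefully through $\Hom_R(eR,e'R)\cong e'Re$ and through the Corollary, and the rest of the argument (parts (1), (3), and the appeal to Proposition~\ref{prop:struct}(1)/(5)) is fine.
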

\begin{proof} Since, for each $1\le i\le m$, $w_i\in D_i$, the result follows from applying Proposition
\ref{prop:struct} 
 
\end{proof}

\begin{definition}{\rm  We call $\End_{\kq}(P)$ the \emph{skeleton of $KQ$} and denote it by $Sk(Q)$. }
\end{definition}

 In the next section we investigate the structure of $Sk(Q)$.

\section{The skeleton of an algebra} 

For every finite dimensional algebra, $\Lambda = KQ/I $, the \emph{skeleton} of $\Lambda$ is a basic algebra in the Morita equivalence class of the commuting algebra of $\Lambda$. 
We believe that, in general, the skelton of an algebra contains basic structural information about the algebra. In this section, we only deal with path algebras; that is,  $I=0$, unless otherwise stated.

  In the construction of the commuting algebra of $KQ$,   we see that there are 
$m$ paths connected  components of $Q$,     each corresponding  to a vertex of the  skeleton,
$Sk(Q)$, of  $KQ$. Let $\{x_1, \dots, x_m\}$ be the vertex set  of $Sk(Q)$ where each $x_i$ corresponds to a path connected component $D_i$ of $Q$.

\begin{proposition}
Let $KQ/C$ be the commuting algebra of $KQ$.  Then the commuting algebra of $KQ/C$ is
isomorphic to $KQ/C$. Moreover, the commuting algebra of $Sk(Q)$ is isomorphic to $Sk(Q)$. 
\end{proposition}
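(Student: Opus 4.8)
The plan is to show that the commuting construction is idempotent on the class of algebras that are already "in block form" with each block being either a full matrix ring over $K$ or zero, and that $KQ/C$ and $Sk(Q)$ both lie in this class. First I would observe that by the Structure Theorem (Theorem~\ref{thm:struct_thm}), $KQ/C$ is a subring of $M_n(K)$ in $(d_1,\dots,d_m)$ block form, where the $(i,j)$ block is $M_{i,j}(K)$ or $M_{i,j}(0)$ according to whether there is a path in $Q$ from $D_i$ to $D_j$. The key point is that such an algebra is itself isomorphic to $KQ'/C'$ for a suitable quiver $Q'$, or more directly: one can realize $KQ/C$ as $K\widetilde Q / \widetilde I$ where $\widetilde Q$ has the same vertex set and the relevant arrows, and then verify directly that the commuting ideal of this algebra is zero. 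Concretely, the strategy is: identify a presentation of $KQ/C$ as a path-algebra-with-relations, and show that for every pair of parallel paths $p \| q$ in that presentation, their images already coincide, so the commuting ideal $C_{KQ/C}$ is zero and the commuting algebra of $KQ/C$ equals $KQ/C$.

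The cleanest route avoids picking an explicit presentation. I would argue at the level of the $e_i (KQ/C) e_j$ pieces, where $e_i = \rho(v_i)$. By Theorem~\ref{thm:nh-dim_1}, each $e_i(KQ/C)e_j$ is at most one-dimensional over $K$, spanned by $\rho\pi(p)$ for any path $p$ from $v_i$ to $v_j$ (if one exists). Now form the commuting algebra $(KQ/C)/C_{KQ/C}$. A parallel pair in $KQ/C$ is a pair of elements $e_i x e_j$ and $e_i y e_j$ with $x,y$ products of images of arrows; but since $e_i(KQ/C)e_j$ is one-dimensional and spanned by the image of any connecting path, any two nonzero such elements are already scalar multiples of one another, and since $f \equiv 1$ the relevant differences in the generating set of $C_{KQ/C}$ are differences $\rho(p)-\rho(q)$ of parallel \emph{paths in $KQ/C$}, which vanish already because $p - q \in C$. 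Hence $C_{KQ/C} = 0$ and $(KQ/C)/C_{KQ/C} \cong KQ/C$. The same argument applies verbatim to $Sk(Q)$: by Lemma~\ref{lem:projgen} and Theorem~\ref{thm:morita}, $Sk(Q) = \End_{KQ/C}(P)$ has orthogonal primitive idempotents $\rho(w_1),\dots,\rho(w_m)$ with each $e_i Sk(Q) e_j$ at most one-dimensional (Proposition~\ref{prop:end_P}), so the same collapsing of parallel paths shows its commuting ideal is zero.

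The main obstacle I anticipate is making precise the claim "a parallel pair of paths in $KQ/C$ maps to a difference that already lies in the commuting ideal we quotient by"—that is, carefully distinguishing between paths in $Q$ and paths in a presenting quiver of $KQ/C$, and checking that the generating differences of $C_{KQ/C}$ are indeed already zero in $KQ/C$ rather than merely landing in some ideal. I would handle this by noting that any path in a presentation of $KQ/C$ is (the image of) a concatenation of images of arrows of $Q$, hence is the image under $\rho\pi$ of an honest path in $Q$; then two parallel such paths $\bar p, \bar q$ are images of parallel paths $p, q$ in $Q$, and $p - q \in C$ already, so $\bar p = \bar q$ in $KQ/C$. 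Thus the set generating $C_{KQ/C}$ consists entirely of zeros, giving $C_{KQ/C} = 0$ and the stated isomorphisms. For $Sk(Q)$ one can alternatively just invoke that $Sk(Q)$ is Morita equivalent to $KQ/C$ and that the commuting construction is Morita invariant on basic algebras, but the direct idempotent-piece argument is self-contained and I would prefer it.
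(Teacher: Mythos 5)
Your argument for the first claim matches the paper's: the commuting ideal of $KQ/C$ (with $KQ/C$ viewed as $KQ/I$ for $I=C$, so the presenting quiver is still $Q$) is generated by the elements $\rho(p)-\rho(q)$ for $p\|q$ in $Q$, and these all vanish because $p-q\in C$. That is exactly the paper's one-line proof, and your longer discussion is essentially a more cautious restatement of it.

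Where you are on thinner ice is the claim that ``the same argument applies verbatim to $Sk(Q)$.'' It does not apply verbatim, because the commuting ideal of $Sk(Q)$ is computed relative to a presentation $Sk(Q)\cong KQ^*/I^*$ whose quiver $Q^*$ is \emph{not} $Q$; the generators are differences $\pi^*(p)-\pi^*(q)$ for parallel paths $p,q$ in $Q^*$, and one must argue that $p-q\in I^*$. One-dimensionality of $x_j\,Sk(Q)\,x_i$ (Proposition~\ref{prop:end_P}) only tells you that $\pi^*(p)$ and $\pi^*(q)$ are proportional, not equal — the constant of proportionality depends on how the arrows of $Q^*$ are normalised inside $e(KQ/C)e$. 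The gap is fillable: either choose the arrows of $Q^*$ to be the canonical spanning elements of $x_j(\rad Sk(Q)/\rad^2 Sk(Q))x_i$ supplied by Corollary~\ref{prop:basis} (so that every path in $Q^*$ is the image of a genuine path in $Q$ and two parallel such paths already coincide because $f\equiv 1$), or observe, once Theorem~\ref{thm:skel_incid} is in hand, that $Sk(Q)$ is itself a commuting algebra of a quiver (the Hasse diagram) and apply the first claim to it. To be fair, the paper's own proof is silent on the $Sk(Q)$ statement and carries the same implicit burden, so you have at least gestured at a justification that the source text omits entirely — but ``verbatim'' overstates the match.
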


\begin{proof}
We show that the commuting ideal of $KQ/C$ is $(0)$.  Let $p\|q$ be parallel 
 paths in $Q$
and $\rho\colon KQ \to KQ/C$ be the canonical surjection.  Then the commuting ideal of
$KQ/C$ is generated by $\{ \rho(p)-\rho(q)$ with $p\|q\}$.
But $\rho(p)-\rho(q)=\rho(p-q)=0$, since $p-q\in C$.
\end{proof}

\begin{proposition}\label{prop:partial}  Setting $x_i \le x_j$   if and only if there is a path from
$w_i \in D_i$ to $w_j \in D_j$ in $Q$, is a partial ordering of the vertex set of $Sk(Q)$.
\end{proposition}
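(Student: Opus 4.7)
The plan is to verify that the proposed relation satisfies the three axioms of a partial order: reflexivity, antisymmetry, and transitivity. Before doing so, I would first observe that the definition is independent of the choice of representatives $w_i \in D_i$ and $w_j \in D_j$: by Proposition~\ref{prop:struct}(2), there is a path in $Q$ from some vertex in $D_i$ to some vertex in $D_j$ if and only if there are paths from every vertex of $D_i$ to every vertex of $D_j$. So the relation $x_i \le x_j$ depends only on the components $D_i, D_j$ and not on the chosen vertices.

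Reflexivity is immediate: the vertex $w_i$ is itself a path of length $0$ from $w_i$ to $w_i$, so $x_i \le x_i$. For transitivity, suppose $x_i \le x_j$ and $x_j \le x_k$. Then there exists a path $p$ from $w_i$ to $w_j$ and a path $q$ from $w_j$ to $w_k$; the concatenation $pq$ is a path in $Q$ from $w_i$ to $w_k$, so $x_i \le x_k$.

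The key step is antisymmetry, and this is where the construction via path connected components really does work. Suppose $x_i \le x_j$ and $x_j \le x_i$. Then there is a path from $w_i$ to $w_j$ and a path from $w_j$ to $w_i$, so $w_i \sim w_j$ under the equivalence relation defined in Section~4. But $w_i \in D_i$ and $w_j \in D_j$, and distinct path connected components are disjoint, so this forces $D_i = D_j$ and hence $i = j$, i.e.\ $x_i = x_j$.

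I do not expect any real obstacle here; the only thing one must be careful about is the well-definedness in terms of the representatives $w_i$, which is supplied by Proposition~\ref{prop:struct}(2). Everything else follows directly from the definition of the path connected components as equivalence classes under $\sim$, and from the fact that concatenation of paths is again a path.
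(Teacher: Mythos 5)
Your proof is correct and follows essentially the same route as the paper: the crux is antisymmetry, and your argument (paths both ways force $w_i\sim w_j$, hence $D_i=D_j$) is exactly the content of Proposition~\ref{prop:struct}(1) that the paper cites at this point. You are in fact a bit more thorough, since you also spell out reflexivity, transitivity, and the independence of the choice of representatives $w_i$, which the paper's proof leaves implicit.
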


\begin{proof}  Let $1\le i,j\le m$ with $i\ne j$.
Suppose that $x_i\le x_j$.  We need to show that
${x_j \not\le x_i}$. But it follows directly from Proposition~\ref{prop:struct} that  there is  no non-zero path from $x_j$ to $x_i$ and hence ${x_j \not\le x_i}$. Note that if $i = j$ then $x_i = x_j$. 
\end{proof}

\begin{definition} If $S$ is a finite partially ordered set, then $s_{i_1}<s_{i_2}<\cdots <s_{i_t}$, for $s_{i_j} \in S$, 
 is called a \emph{chain of length $t$}.
\end{definition}

\begin{theorem}\label{thm:fin_dim} Let $Q$ be a finite quiver. Then the commuting algebra of
$KQ$ and  $Sk(Q)$ are   finite dimensional $K$-algebras  with finite global dimension $\le$ the length
of the longest chain of vertices of $Sk(Q)$.

\end{theorem}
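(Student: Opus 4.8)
The plan is to establish the theorem in two stages: first show that $Sk(Q)$ is an incidence algebra of the poset of path connected components, and then invoke the standard fact that incidence algebras of finite posets have global dimension bounded by the length of the longest chain. By Theorem~\ref{thm:struct_thm} the commuting algebra $KQ/C$ is finite dimensional, and by Theorem~\ref{thm:morita} its basic representative $Sk(Q) = \End_{KQ/C}(P)$ is finite dimensional as well; Morita equivalence preserves global dimension, so it suffices to bound $\gldim Sk(Q)$. First I would unpack Proposition~\ref{prop:end_P}: its three parts say exactly that, with respect to the vertex set $\{x_1,\dots,x_m\}$ and the relation $\le$ of Proposition~\ref{prop:partial}, we have $\dim_K x_i Sk(Q) x_j \le 1$, equal to $1$ precisely when $x_j \le x_i$ (reading the path direction carefully), and $0$ otherwise, with no two-way nonzero Hom spaces. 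Together with Proposition~\ref{prop:partial}, which guarantees $\le$ is genuinely a partial order, this identifies $Sk(Q)$ with the incidence algebra $K[\cP]$ of the finite poset $\cP = (\{x_1,\dots,x_m\}, \le)$, where multiplication of the basis elements corresponding to comparable pairs is given by concatenation — and the fact that all structure constants are $1$ follows from the choice $f \equiv 1$ together with Corollary~\ref{prop:basis}, so there is no twisting and the algebra is literally the incidence algebra, not merely a twisted version of one.

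Once the identification $Sk(Q) \cong K[\cP]$ is in place, the global dimension statement is the classical result on incidence algebras. If one prefers a self-contained argument, I would construct, for each vertex $x$ of $\cP$, a projective resolution of the simple module $S_x$ whose $k$-th term is a sum of indecomposable projectives $P_y$ indexed by chains $x = x_0 < x_1 < \cdots < x_k$ starting at $x$; the differential is an alternating-sum (simplicial-style) map built from the canonical inclusions of projectives associated to covering-or-comparable relations. One checks this complex is exact by a standard telescoping/contracting-homotopy computation using that each $\dim_K x Sk(Q) y \le 1$, so the combinatorics of the resolution is governed purely by the order complex of $\cP$. The resolution has length equal to the length of the longest chain through $x$, hence $\pd S_x \le$ (length of longest chain in $\cP$), and taking the maximum over $x$ gives $\gldim Sk(Q) \le$ (length of the longest chain of vertices of $Sk(Q)$), which is the asserted bound. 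Finally, transporting back along the Morita equivalence of Theorem~\ref{thm:morita} yields the same bound for $\gldim(KQ/C)$, and both algebras are finite dimensional by Theorem~\ref{thm:nh-dim_1}(2) and the remark following Theorem~\ref{thm:struct_thm}.

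The main obstacle I anticipate is verifying exactness of the chain-complex resolution — making the alternating-sum differential precise and confirming $d^2 = 0$ and acyclicity in positive degrees. The subtle point is that the maps between indecomposable projectives are only well defined up to scalar a priori, but since every $\dim_K x Sk(Q) y \le 1$ the composites are forced, and one must pin down a coherent choice of representatives so that the simplicial identities hold on the nose; this is where the constant function $f \equiv 1$ (and Corollary~\ref{prop:basis}) is genuinely used. Alternatively, if we are content to cite the literature on incidence algebras (e.g.\ the references \cite{B, IM} already in the bibliography), this step collapses to a one-line invocation, and the only real work is the identification $Sk(Q) \cong K[\cP]$, which is essentially immediate from Propositions~\ref{prop:end_P} and~\ref{prop:partial}.
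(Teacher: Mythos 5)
Your proposal is correct and follows essentially the same route as the paper: establish finite dimensionality, note that Proposition~\ref{prop:partial} makes the vertices of $Sk(Q)$ into a finite poset, invoke the standard bound on the global dimension of an incidence algebra by the length of the longest chain, and transport the bound to $KQ/C$ via the Morita equivalence of Theorem~\ref{thm:morita}. The only difference is one of exposition: where the paper compresses the key step into "by standard arguments," you fill in those arguments (the identification of $Sk(Q)$ with the incidence algebra $K[\cP]$ — the content of Theorem~\ref{thm:skel_incid}, stated later in the paper — and the chain-indexed projective resolutions of the simples), which is a more detailed but not substantively different argument.
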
 

\begin{proof}  Let $\{x_1\dots,x_m\}$ be the vertices of $Sk(Q)$.  By a proof similar to the proof of 
Theorem \ref{thm:nh-dim_1}, the skeleton of $Q$ is finite dimensional. 
By Proposition \ref{prop:partial} $\{ x_1,\dots, x_n\}$ are partially ordered by $x_i\le x_j$ if either $i=j$ or
there is a path in  $Q$ from a vertex in $D_i$ to a vertex in $D_j$.
By standard arguments it follows that the global dimension of the skeleton of $KQ$ is less than
or equal to the longest chain of vertices.  The result
follows since the commuting algebra of $Q$ is Morita equivalent to the skeleton of $Q$ and
global dimension is preserved by Morita equivalence.
\end{proof}

Let $(S,\preceq)$ be a finite partially ordered set.
 We say  that, for $s$ and $t$ in $S$, that $s$ is an \emph{immediate pedecesor } of $t$ or 
 $t$ is an \emph{immediate successor of $s$ if $s \prec t$}, and there is no $u\in S$ such that
$s\prec u\prec t$.  The \emph{incidence  algebra of the partially ordered set
$(S,\preceq)$} is the commuting algebra, denoted $Incid(S)$, of the quiver $Q(S)$, where $S$ is the vertex set of $(Q(S),\preceq)$
 and there is an arrow from vertex $s$ to vertex $t$ if $t$ is an immediate successor of $s$.

Note that the incidence algebra of a partially ordered set  is a uniquely defined  algebra.

\begin{theorem}\label{thm:skel_incid}  Let $Sk(Q)$ be  the skeleton of $KQ$, for a finite quiver $Q$.	Let $V$ be                         the vertex set   of $Sk(Q)$.  Viewing $V$ as a partially ordered set, the incidence algebra $Incid(V)$ is isomorphic
to $Sk(Q)$.
\end{theorem}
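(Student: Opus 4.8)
The plan is to show that $Sk(Q)$ and $Incid(V)$ are both commuting algebras of quivers on the same vertex set $V$, and that these two quivers, while not literally equal, have the same set of nonzero paths up to the commuting relations, so the two algebras coincide. First I would recall from Proposition~\ref{prop:end_P} the explicit description of $Sk(Q) = \End_{\kq}(P)$: its vertices are $x_1,\dots,x_m$ (one per path connected component $D_i$), and $x_i \le x_j$ precisely when there is a path in $Q$ from a vertex of $D_i$ to a vertex of $D_j$, with $\dim_K \End_{\kq}\rho(w_i)(\kq)\rho(w_j) \le 1$ in all cases. By Proposition~\ref{prop:partial} this $\le$ is a genuine partial order on $V$. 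So $Sk(Q)$ is a finite dimensional algebra whose underlying "Hom-space poset" is exactly $(V,\le)$, with all Hom-spaces at most one-dimensional.

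Next I would recall the definition of $Incid(V)$: it is the commuting algebra $KQ(V)/C$ of the Hasse quiver $Q(V)$, which has vertex set $V$ and an arrow $s\to t$ whenever $t$ is an immediate successor of $s$ in $(V,\le)$. By Corollary~\ref{prop:path_implies_arrows} (applied with $I=0$), $e_s\,(KQ(V)/C)\,e_t \ne 0$ if and only if there is a path in $Q(V)$ from $s$ to $t$, and such a path exists if and only if $s \le t$ in the poset — this is the standard fact that reachability in a Hasse diagram recovers the partial order, which holds because $(V,\le)$ is finite. Moreover, by Theorem~\ref{thm:nh-dim_1}(1) that Hom-space is then exactly one-dimensional. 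So $Incid(V)$ is also a finite dimensional algebra with vertex set $V$, with $e_s\,Incid(V)\,e_t$ one-dimensional exactly when $s\le t$ and zero otherwise.

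The final step is to produce the isomorphism. Since both algebras are spanned over $K$ by (images of) paths indexed by the pairs $s \le t$, with each such space one-dimensional, I would fix, for each pair $s\le t$ in $V$, a nonzero element: in $Sk(Q)$ take $\varphi_{st} := \rho(p_{st})+C$ for some path $p_{st}$ in $Q$ from $w_s$ to $w_t$ (nonzero by Proposition~\ref{prop:end_P}), and in $Incid(V)$ take $\psi_{st}$ the image of some path in $Q(V)$ from $s$ to $t$. Define $\Phi\colon Incid(V)\to Sk(Q)$ by $\psi_{st}\mapsto \varphi_{st}$, extended $K$-linearly. This is clearly a $K$-linear bijection on the chosen bases. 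To see it is multiplicative, note that in both algebras the product $\psi_{st}\psi_{tu}$ (resp. $\varphi_{st}\varphi_{tu}$) is again the unique-up-to-scalar basis element of the one-dimensional space $e_s(-)e_u$ when $s\le t\le u$, and is zero when the composite is not composable (different middle vertices) — and by Corollary~\ref{prop:basis}, in a commuting algebra with $f\equiv 1$ the product of two nonzero path-classes that compose is exactly the path-class of the concatenation, so there is no scalar discrepancy; transitivity of $\le$ guarantees $s\le u$ whenever $s\le t\le u$. Checking these compatibility relations on basis elements and extending linearly gives that $\Phi$ is a $K$-algebra isomorphism.

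The main obstacle I anticipate is bookkeeping rather than conceptual: verifying that the products of basis elements really do match on the nose (no nonzero scalar appears) requires carefully invoking Corollary~\ref{prop:basis} with $f\equiv 1$ on both sides, and one must be slightly careful that a concatenation $p_{st}p_{tu}$ in $Q$ need not equal the chosen representative $p_{su}$, but by Corollary~\ref{prop:basis} it represents the same class, which is all that is needed. One should also note the edge case $s=t$, where $\varphi_{ss}=\rho(w_s)$ and $\psi_{ss}=e_s$ are the idempotents, so $\Phi$ sends identity to identity.
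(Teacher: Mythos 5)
Your proposal is correct and follows essentially the same route as the paper: both compare the vertex sets, identify the one-dimensional Hom-spaces $e_s(-)e_t$ with the relation $s\le t$, and conclude by matching multiplication. The paper delegates the final step to "an easy exercise," whereas you actually construct the isomorphism on basis elements and verify multiplicativity using Corollary~\ref{prop:basis} with $f\equiv 1$ — a useful amplification of what the paper leaves implicit (including the observation that finiteness of $V$ is what lets reachability in the Hasse quiver recover the partial order). One small notational slip: since $\rho$ already has target $KQ/C$, you should write $\varphi_{st}:=\rho(p_{st})$ rather than $\rho(p_{st})+C$.
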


\begin{proof} 
Since $Sk(Q)$ is finite dimensional, with
one dimensional simple modules, it is isomorphic
to the quotient of a path algebra $KQ^*/I^*$.  Let $V$ be the vertex set of $Q^*$.
By Proposition
 \ref{prop:partial}, $(V,\preceq)$ is a partially ordered set, and we let $Incid(V)$ be its incidence    
algebra.  The vertices of both  $Incid(V)$ and $Sk(Q)$
are the same.   The algebra $Sk(Q)$  has one vertex  for each path connected
component of $Q$.  There is  an arrow from  $v$ to $w$ if there is a path from 
$v$ to $w$ in $Sk(Q)$.        On the other hand, there is  an arrow from  $v$ to $w$ in $Incid(V)$ if $v\prec w$ that is if $v$ is a immediate predecessor of $w$. It is an easy exercise check that there
is a $K$-algebra isomorphism from $Sk(Q)$ to $Incid(V)$. 
\end{proof}

As is well-known,  incidence algebras and algebras of partially ordered finite sets
 are isomorphic, see, for example \cite[Section 1.2]{IM} or more generally, also see \cite{S, SO}.  Recall that the incidence algebra of a partially ordered set $P$ is
 the $K$-algebra with basis given by elements $p^x_y$ whenever two elements $x,y \in P$ are such that $x < y$. The multiplication of basis elements is given by $p^x_y p^w_z= p^x_z$ if $y =w$ and the product is zero otherwise. 
  On the other hand the algebra of a partially ordered set is the quotient $KQ/I$ of the path algebra of a quiver $Q$ where the vertices of the quiver are the elements of the partially ordered set $P$ and  there is an arrow from vertex $v$ to vertex $w$ if $v < w$  and if there is no other element $s$ in $P$ such that $v < s  <w$ unless $s=v$ or $s =w$. The ideal $I$ is  generated by $p -q$ for any parallel paths $p$ and $q$ in $Q$ of length at least two. These can be viewed as the commuting algebras of path algebras whose quiver is a
 Hasse diagram. Since the skeleton of an algebra is Morita equivalent to the commuting algebra of $KQ$, for any path algebra,  the category of finitely generated 
modules over the skeleton of the commuting algebra of $KQ$ embeds into the 
the category of  $KQ$-modules. This observation allows one to apply the
rich theory of representations of partially ordered sets to study of a `piece' of
the (usually wild) category of $KQ$-modules.

\section{Examples}

 \begin{Example}\label{ex:4x2}
\rm{
Let $Q$ be the quiver:

$$\xymatrix{
v_1\ar[rr]\ar[rd]&&v_2\ar[ddd]\\
&v_5\ar@/^/[d]\\
&v_6\ar@/^/[u]\\
v_4\ar[uuu]&&v_3\ar[ll]\ar[lu]
}$$
} There are two path connected components $$D_1=\{v_1,v_2,v_3,v_4\} \mbox{ and }
D_2=\{v_5,x_6\}.$$  The commuting algebra for $KQ$ in block form is:
$$
\begin{pmatrix}
K&K&K&K&K&K\\
K&K&K&K&K&K\\
K&K&K&K&K&K\\
K&K&K&K&K&K\\
0&0&0&0&K&K\\
0&0&0&0&K&K
\end{pmatrix}
$$
  and the skeleton of $KQ$ has 2 vertices corresponding to $D_1$ and $D_2$ and an arrow
from ``$D_1$" to ``$D_2$"

The skeleton is the incidence algebra of $w_1\to w_2$.  The incidence algebra is isomorphic to
$$\begin{pmatrix}K&K\\0&K\end{pmatrix}$$
\end{Example}

\begin{Example}{\rm Consider the quivers $$ Q_1 =\xymatrix{  1\ar[r]^a&2\ar[r]^{b}& 3\ar[d]^c\\
6\ar[u]^f&5\ar[l]^e&4\ar[l]^d
}  \mbox{ and     }
Q_2=\xymatrix{  1\ar[r]^a&2\ar[d]^g\ar[r]^{b}& 3\ar[d]^c\\
6\ar[u]^f&5\ar[l]^e&4\ar[l]^d }$$

Both $Q_1$ and $Q_2$ have the same commuting algebra, namely, the  $6\times 6$ matrix ring and their  skeleton is given by a single vertex.  But $KQ_1$ and $KQ_2$ are not isomorphic.  The skeleton of both $KQ_1$ and $KQ_2$
 is a single vertex.
}\end{Example}

\begin{Example}\label{ex:6x6}{\rm
                 Let $Q=\xymatrix{
1\ar[r]^a&2\ar[d]^g\ar[r]^{b}& 5\ar[d]^c\\
4\ar[u]^f&3\ar[l]^e\ar[r]^d&6}
$  The path connected components are $D_1=\{1,2,3,4\},  D_2=\{ 5\}$, and $ D_3= \{6\} $ and  the commuting algebra $KQ/C$ is the $6\times 6$ matrix ring 
\[KQ/C=\begin{pmatrix}  K &K& K& K &K&K   \\K                 
&K   &K&K &K&K\\K                      
&K   &K&K &K&K\\K                      
&K   &K&K &K&K\\0&0&0&0&K&K\\0&0&0&0&0&K
\end{pmatrix}\]  The skeleton of
$KQ$ is $w_1\to w_2\to w_3$.
}
\end{Example}

The next example shows that if $f(p)=1$ but $I\ne 0$, then  the global dimension of  the commuting 
algebra can be infinite.

\begin{Example}\label{ex:inf-gl}   {\rm First take $I=0$.
and  $Q=\xymatrix{&v_1\ar[dr]^a\\
v_3\ar[ur]^c&&v_2\ar[ll]^b}$
The commuting algebra of $Q$ is the $3\times3$ matrix ring with entries in $K$.   We know that the skeleton of the $3\times 3$ matrix ring with entries in $K$ is  just $K$. 

Continuing, 
we now consider $KQ/I$ where $I=\langle ab,bc,ca \rangle$. 
In this case, $KQ/I$ is a monomial algebra and has no paths of length 2. It is well-known that this algebra has infinite global dimension.
Then the global dimension of $((KQ/I)/C_{KQ/I})$ is also infinite since $C_{KQ/I}=0$.

}\end{Example}

We give some further examples of commuting algebras and skeletons.
\begin{Example}
Let $Q$ be the quiver that has two vertices $v$ and $w$ and $n$ arrows from $v$ to $w$. Then the commuting algebra of $Q$ isomorphic to the skeleton of $KQ$ and consists of two vertices  and one arrow from $v$ to $w$. 
\end{Example}

\begin{Example} 
If the underlying graph of a quiver $Q$ is a tree then the commuting algebra of $Q$ is isomorphic to  the skeleton of $KQ$ and is the algebra is itself. This follows from Proposition \ref{prop:partial}.
\end{Example}

\begin{Example}
Let $Q$ be an oriented cycle with $n$ vertices and $n$ arrows. Then the commuting algebra
of $Q$ is isomorphic to the $n \times n$-matrix ring whereas  the skeleton of $Q$ corresponds  to a vertex with no arrows. So in this case the commuting algebra of $KQ$ is not isomorphic to the skeleton of $KQ$. 
\end{Example}

\bibliographystyle{alph}

\begin{thebibliography}{99}

\bibitem[B]{B} Birkhoff, Garrett. Lattice theory. Corrected reprint of the 1967 third edition. American Mathematical Society Colloquium Publications, Vol. 25 American Mathematical Society, Providence, R.I., 1979.


\bibitem[FI]{FI} Futorny, Vyacheslav; Iusenko, Kostiantyn. Stable representations of posets. J. Pure Appl. Algebra 223 (2019), no. 12, 5251--5278.


 
 
 
 \bibitem[IZ]{IZ} Igusa, Kiyoshi; Zacharia, Dan.
On the cohomology of incidence algebras of partially ordered sets.
Comm. Algebra 18 (1990), no. 3, 873--887.

\bibitem[IM]{IM} Iyama, Osamu; Marczinzik, Ren\'e.
Distributive lattices and Auslander regular algebras. 
Adv. Math. 398 (2022), Paper No. 108233, 27 pp.

\bibitem[K]{K} Kleiner, M. Mark. Partially ordered sets of finite type. (Russian) Investigations on the theory of representations. Zap. Naucn. Sem. Leningrad. Otdel. Mat. Inst. Steklov. (LOMI) 28 (1972), 32--41.
 

\bibitem[NR]{NR} Nazarova, L. A.; Roiter, A. V.
Representations of partially ordered sets. (Russian)
Investigations on the theory of representations.
Zap. Naucn. Sem. Leningrad. Otdel. Mat. Inst. Steklov. (LOMI) 28 (1972), 5--31.




\bibitem[S]{S} Simson, Daniel. Linear Representations of Partially Ordered Sets and Vector Space Categories, Algebra, Logic and Applications, vol. 4, Gordon and Breach Science Publishers, 1992.


\bibitem[SO]{SO} Spiegel, Eugene;  O'Donnell, Christopher J.
Incidence algebras. 
Monographs and Textbooks in Pure and Applied Mathematics, 206. Marcel Dekker, Inc., New York, 1997.
 
\end{thebibliography}


%

\end{document}